\newtheorem{assumption}{Assumption}[section]
\newtheorem{example}{Example}[section]
\newcommand{\sy}[1]{{\color{black} #1}}
\title{Comparison of Information Structures for Zero-Sum Games and a Partial Converse to Blackwell Ordering in Standard Borel Spaces\thanks{A conference version \cite{HYISIT2020} was presented at the International Symposium on Information Theory, as an announcement of the narrower versions of some of the results. This research was supported in part by
the Natural Sciences and Engineering Research Council (NSERC) of Canada.}}
\author{Ian Hogeboom-Burr \and Serdar Y\"{u}ksel\thanks{The authors are with the Dept. of Mathematics and Statistics, Queen's University, Kingston K7L 3N6, ON, Canada, {\tt\small \{15ijhb,yuksel\}@queensu.ca}.}}
\begin{document}
\maketitle

\begin{abstract}
In statistical decision theory involving a single decision-maker, an information structure is said to be better than another one if for any cost function involving a hidden state variable and an action variable which is restricted to be conditionally independent from the state given some measurement, the solution value under the former is not worse than that under the latter. For finite spaces, a theorem due to Blackwell leads to a complete characterization on when one information structure is better than another. For stochastic games, in general, such an ordering is not possible since additional information can lead to equilibria perturbations with positive or negative values to a player. However, for zero-sum games in a finite probability space, P\k{e}ski introduced a complete characterization of ordering of information structures. In this paper, we obtain an infinite dimensional (standard Borel) generalization of P\k{e}ski's result. A corollary is that more information cannot hurt a decision maker taking part in a zero-sum game. We establish two supporting results which are essential and explicit though modest improvements on prior literature: (i) a partial converse to Blackwell's ordering in the standard Borel  setup and (ii) an existence result for equilibria in zero-sum games with incomplete information.
\end{abstract}


\section{Introduction}\label{intro}

Characterizing the value of information structures is a problem in many disciplines involving decision making under uncertainty. In stochastic control theory, it is well-known that more information cannot hurt a given decision maker since the decision maker can always choose to ignore this information. In statistical decision theory involving a single decision maker, one says that an information structure is better than another one if for any given measurable and bounded cost function involving a hidden state variable and an action variable which is restricted to be only a function of some measurement, the solution value obtained under optimal policies under the former is not worse than the value obtained under the latter. For finite probability spaces, Blackwell's celebrated theorem \cite{Blackwell} on the ordering of information structures obtains a precise characterization of when an information structure is better. This finding has inspired much further research as reviewed in e.g. \cite{LeCamReview, raginsky2011shannon}.

Since Blackwell's seminal 1953 paper \cite{Blackwell}, significant work has been done to extend Blackwell's results to team problems and games. Stochastic team problems (known also as identical interest games) were studied in a finite-space setting by Lehrer, Rosenberg, and Shmaya \cite{Lehrer}; see also \cite[Chapter 4]{YukselBasarBook}. The value of information in various types of repeated games has also been explored in \cite{Kandori1991}, \cite{KandoriObara2006}, and \cite{Kloosterman2015}. 

In general games, information can have both positive and negative value to a player since additional information can lead to a perturbation which is not necessarily monotone due to the presence of competitive equilibrium, unlike in a team setup. Some of the earlier accounts on such phenomena are \cite{hirshleifer1971private} and \cite{basar1974informational}, where the latter studied the comparison of information structures for team-like (LQG) and zero-sum like (quadratic duopoly) games. 

As noted above, for general non-zero sum game problems, informational aspects are very challenging to address and more information can hurt some or even all of the players in a system, see e.g. \cite{hirshleifer1971private,gossner2001value,kamien1990value,tBasarStochasticDiffGames}. To make this discussion more concrete, we provide the following example due to Bassan et al. \cite{Bassan2003}. 

\begin{example}
Consider a card drawn at random from a deck, where its colour can be either red or black, each with probability $1/2$. Player 1 first declares his guess of the colour, and then, after hearing what Player 1 guessed, Player 2 submits her guess for the colour. If both players guess the same colour, the payout is \$2 each, whereas if one player guesses correctly, that player receives a payout of \$6 and the other player receives \$0. 

In the case where both players are uninformed about the colour of the card, the expected payout is \$3 each, as Player 1's optimal strategy is arbitrary, and Player 2's optimal strategy is to guess the opposite colour of what Player 1's guessed.

In the case where both players are informed of the colour of the card prior to declaring their guess, the equilibrium for the game occurs when both players guess the true colour of the card. In this case, the expected payout becomes \$2 for each player.  \hfill $\diamond$
\end{example}


Bassan et al. further provided sufficient conditions for games to have the `positive value of information property', where providing additional information to some or all players results in greater or equal payoffs for all players \cite{Bassan2003}. Gossner and Mertens highlighted zero-sum games as a particularly interesting class to study in the context of ordering information structures in games and did preliminary work on this ordering \cite{gossner2001value}; zero-sum games provide a worthwhile class of games to study due to the fact that, under mild conditions, every game has a value (achieved at a saddle point). 

For comparison of information structures in zero-sum games with finite measurement and action spaces, P\c{e}ski provided necessary and sufficient conditions, and thus a complete characterization \cite{Peski}. Prior to P\c{e}ski's results, De Meyer, Lehrer, and Rosenberg had shown that the value of information is positive in zero-sum games, albeit with a slightly different setup than P\c{e}ski, where their payoff depended on an individual `type' for each player rather than a common state of nature; their results were applicable for infinite action spaces and finite type spaces \cite{Lehrer2009}. Furthermore, Lehrer and Shmaya studied a `malevolent nature' zero-sum game played between nature and a player in a finite setting, and characterized a partial ordering of information structures for these games \cite{LehrerShmaya2008}. We also note the following references on topological and continuity properties of information structures in single-agent and multi-agent team problems \cite{YukselOptimizationofChannels,YukselWitsenStandardArXiv,YukselSaldiSICON17}. A recent comprehensive study on the value and topological properties of information structures in zero-sum games, which also generalizes \cite{Peski} to the countably infinite probability space setup, is \cite{pkeski2019value}. 

In this paper, we generalize P\k{e}ski's results to a broad class of zero-sum games with standard Borel measurement and action spaces: we recall that a metric space which is complete and separable is called a Polish space, and a Borel subset of a Polish space is called a {\it standard Borel} space. Finite dimensional real vector spaces are important examples of such spaces. 

Toward this goal, additional supporting results, which may be of independent interest, are obtained: sufficient conditions are presented (i) for the existence of saddle-point equilibria in zero-sum games with incomplete information and (ii) for a partial converse to Blackwell's ordering when the player has standard Borel measurement and action spaces and the unknown variable also takes values from a standard Borel space.

\section{A review of prior results and contributions}

\subsection{Comparison of information structures in single-agent problems}
Let $x \sim \zeta$ be an $\mathbb{X}$-valued random variable with $\mathbb{X}$ being a standard Borel space. We call $x$ the state of nature; $\zeta$ is known by the decision maker but $x$ is not. Recall that a standard Borel space is a Borel subset of a complete, separable, metric (Polish) space. Let $\mathbb{Y}$, our measurement space, be another standard Borel space and $y$ be $\mathbb{Y}$-valued, defined with
\[y = g(x, \omega),\] for some independent noise variable $\omega$ (which, without any loss, can be taken to be $[0,1]$-valued).  In the above, we can view $g$ as inducing a measurement channel $Q$, which is a stochastic kernel  or a regular
conditional probability measure from  $\mathbb{X}$ to $\mathbb{Y}$ in the sense that
$Q(\,\cdot\,|x)$ is a probability measure on the (Borel)
$\sigma$-algebra ${\cal B}(\mathbb{Y})$ on $\mathbb{Y}$ for every $x
\in \mathbb{X}$, and $Q(A|\,\cdot\,): \mathbb{X}\to [0,1]$ is a Borel
measurable function for every $A \in {\cal B}(\mathbb{Y})$.

Given a fixed $\mathbb{X}$, $\mathbb{Y}$, and $\zeta$, a single player decision problem is a pair $(c, \mathbb{U})$ of a cost function $c: \mathbb{X} \times \mathbb{U} \rightarrow \mathbb{R}$ and an action space $\mathbb{U}$. 

Using stochastic realization results (see Lemma 1.2 in \cite{gihman2012controlled}, or Lemma 3.1 of \cite{BorkarRealization}), it follows that the functional representation in $y=g(x,v)$ is equivalent to a stochastic kernel description of an information structure, since for every $Q$, one can define $g$ and a $[0,1]$-valued random function $V$ so that the representation holds almost surely.

Let $\mathcal{P}(\mathbb{X})$ denote the set of all probability measures on (the Borel sigma field over) $\mathbb{X}$. For $\sy{\zeta}\in  \mathcal{P}(\mathbb{X})$ and kernel $Q$, we let $\sy{\zeta} Q$ denote the joint distribution induced on
$(\mathbb{X}\times \mathbb{Y}, \mathcal{B}(\mathbb{X}\times
\mathbb{Y}))$ by channel $Q$ with input distribution $\sy{\zeta}$:
\[  \sy{\zeta} Q(A) = \int_{A} Q(dy|x)\sy{\zeta} (dx), \quad A\in  \mathcal{B}(\mathbb{X} \times \mathbb{Y}). \]


Now, let the objective be one of minimization of the cost
\begin{eqnarray}\label{Cost}
J(\sy{\zeta},Q,{\gamma})=E_{\sy{\zeta}}^{Q,{\gamma}}\bigg[c(x,u)\bigg],
\end{eqnarray}
over the set of all admissible measurable policies $\Gamma := \{\gamma: \mathbb{Y} \to \mathbb{U}\}$ with $u=\gamma(y)$, where
$c:\mathbb{X}\times \mathbb{U}\to \mathbb{R}$ is a Borel measurable cost function and $E_{\sy{\zeta}}^{Q,\gamma}$ denotes the expectation with initial state probability measure given by $\sy{\zeta}$, under policy $\gamma$, and given channel $Q$.

The comparison question is the following: when can one compare two measurement channels $Q^1, Q^2$ such that
\[ \inf_{{\gamma} \in {\bf \Gamma}} J(\sy{\zeta},Q^1,{\gamma}) \leq \inf_{{\gamma} \in {\bf \Gamma}} J(\sy{\zeta},Q^2,{\gamma}),
\]
for a large class of single-player decision problems in (\ref{Cost})?

We now recall the notion of garbling. We note that garbling is sometimes defined to be equivalent to physical degradedness of communication channels (as opposed to stochastic degradedness) \cite{Cover}, however in this paper we will take stochastic degradedness and garbling to be equivalent.

\begin{definition}\label{garbling}
An information structure induced by some channel $Q_2$ is garbled (or stochastically degraded) with respect to another one, $Q_1$, if there exists a channel $Q'$ on $\mathbb{Y}\times \mathbb{Y}$ such that
\[
Q_2(B|x) = \int_{\mathbb{Y}} Q'(B|y) Q_1(dy|x), \; B \in {\cal
  B}(\mathbb{Y}),\;\sy{\zeta} \; a.s. \;x \in \mathbb{X}.
\]
\end{definition}
\sy{We also define the notion of \textit{more informative than} and introduce a useful result:}
\begin{definition}\label{moreinformative}
An information structure $\mu$ is \textit{more informative than} another information structure \sy{$\nu$} if 
\begin{equation}
    \inf_{\gamma \in \Gamma} E^{\sy{\nu},\gamma}_\sy{\zeta}[c(x,u)] \geq \inf_{\gamma \in \Gamma} E^{\mu,\gamma}_\sy{\zeta}[c(x,u)]\sy{,}\nonumber
\end{equation}
for all single player decision problems $(c(x,u), \mathbb{U})$. 
\end{definition}
\sy{
\begin{proposition}\label{useful}
The function 
\[V(\zeta):= \inf_{u\in \mathbb{U}} \int c(x,u) \zeta(dx),\]
is concave in $\zeta$, under the assumption that $c$ is measurable and bounded.
\end{proposition}

For a proof of this proposition see \cite[Theorem 4.3.1]{YukselBasarBook}.

We emphasize that in Definition \ref{moreinformative}, $\mathbb{U}$ is also a design variable for the decision problem. For instance, if $\mathbb{U}$ were a singleton, then the comparison of information structures would be meaningless. With this in mind, and in view of Proposition \ref{useful}, we state Blackwell's classical result in the following.}

\begin{theorem} \label{BlackwellInformative} [Blackwell \cite{Blackwell}]
Let $\mathbb{X}, \mathbb{Y}$ be finite spaces. The following are equivalent:
\begin{itemize}
\item[(i)]
$Q_2$ is stochastically degraded with respect to $Q_1$ (that is, a garbling of $Q_1$).
\item[(ii)] The information structure induced by channel $Q_1$ is more informative than the one induced by channel $Q_2$ for all single player decision problems with finite $\mathbb{U}$.
\end{itemize}
\end{theorem}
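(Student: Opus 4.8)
The plan is to prove the two implications separately, with the direction (i) $\Rightarrow$ (ii) being the easy, purely computational one and the direction (ii) $\Rightarrow$ (i) being the substantive part that invokes the concavity result of Proposition \ref{useful} together with a separating-hyperplane argument adapted to the finite-dimensional setting.

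For (i) $\Rightarrow$ (ii), suppose $Q_2(\cdot|x) = \int_{\mathbb{Y}} Q'(\cdot|y)Q_1(dy|x)$ for a garbling kernel $Q'$. Given any decision problem $(c,\mathbb{U})$ and any policy $\gamma: \mathbb{Y} \to \mathbb{U}$ for the $Q_2$-structure, I would construct a randomized policy for the $Q_1$-structure by first passing the observation $y$ through $Q'$ to generate $y' \sim Q'(\cdot|y)$ and then applying $\gamma$ to $y'$; since $\mathbb{Y}$ is finite this randomization can be realized by an extra independent coin, and one checks by direct substitution that the induced joint law of $(x,u)$ is identical under the two constructions, so the achieved cost is the same. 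Taking the infimum over $\gamma$ shows $\inf_\gamma J(\zeta,Q_1,\gamma) \le \inf_\gamma J(\zeta,Q_2,\gamma)$; allowing randomized policies does not lower the infimum for the single-agent problem (one can always condition on the randomization seed and pick its best value), so this yields the ``more informative'' inequality in Definition \ref{moreinformative}.

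For (ii) $\Rightarrow$ (i), the key is Proposition \ref{useful}: for fixed $\mathbb{U}$ and $c$, the value $V_c(\zeta) = \inf_{u\in\mathbb{U}} \int c(x,u)\,\zeta(dx)$ is concave in $\zeta \in \mathcal{P}(\mathbb{X})$, and the optimal single-agent cost under a channel $Q$ with prior $\zeta$ equals $\int_{\mathbb{Y}} V_c\big(\zeta_{Q}(\cdot|y)\big)\,(\zeta Q)(dy)$, i.e. the expected value of $V_c$ evaluated at the posterior. Thus a channel $Q_1$ is more informative than $Q_2$ (for \emph{all} $(c,\mathbb{U})$) if and only if, for every bounded concave $f$ on the simplex $\mathcal{P}(\mathbb{X})$ (since every such $f$ arises as $V_c$ for a suitable finite $\mathbb{U}$ and cost $c$, with $\mathbb{U}$ a free design variable), the posterior distribution $P_1 \in \mathcal{P}(\mathcal{P}(\mathbb{X}))$ induced by $Q_1$ dominates the posterior distribution $P_2$ induced by $Q_2$ in the convex (Choquet/dilation) order: $\int f\,dP_1 \ge \int f\,dP_2$. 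Both $P_1$ and $P_2$ have the same barycenter $\zeta$. By the classical Cartier--Fell--Meyer / Strassen dilation theorem on the compact convex set $\mathcal{P}(\mathbb{X})$ (here a simplex since $\mathbb{X}$ is finite), $P_2$ is a dilation of $P_1$: there is a Markov kernel taking each posterior $\theta$ in the support of $P_1$ to a probability measure on posteriors with barycenter $\theta$, pushing $P_1$ to $P_2$. Translating this dilation kernel back through the (finitely many) observation symbols of $\mathbb{Y}$, and using Bayes' rule to identify posteriors with columns of the channel matrices, one extracts a stochastic matrix $Q'$ on $\mathbb{Y}\times\mathbb{Y}$ satisfying $Q_2(\cdot|x) = \sum_{y} Q'(\cdot|y)Q_1(y|x)$ for $\zeta$-almost every $x$, which is precisely the garbling relation.

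The main obstacle is the last translation step: passing from an abstract dilation of measures-on-posteriors to a concrete garbling matrix on the observation space. One must handle the non-injectivity of the map $y \mapsto \zeta_{Q_1}(\cdot|y)$ (distinct observation symbols may induce the same posterior, and symbols of zero probability under $\zeta Q_1$ carry no constraint), verify that the resulting $Q'$ is genuinely a stochastic kernel (rows summing to one, non-negative entries), and confirm the identity holds $\zeta$-a.s.\ rather than everywhere. In the finite setting all of these are elementary bookkeeping with the channel matrices and Bayes' rule, which is why the finite case is classical; the reason it is worth isolating here is that the standard Borel generalization pursued later in the paper must replace each of these finite manipulations with a measurable-selection or disintegration argument.
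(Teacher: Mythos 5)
Your direction (i) $\Rightarrow$ (ii) (simulate the garbling with an independent randomizer and note that randomization does not help a single agent) is fine and is essentially interchangeable with the paper's route via Proposition \ref{useful} and Jensen's inequality. For the converse, note first that the paper does not actually reprove Blackwell's finite theorem at all: it cites \cite{Blackwell}, and the converse machinery it builds for its own purposes (Theorem \ref{ConverseBlackwellOrig}) is a Hahn--Banach separation argument over the weakly compact convex set $K\mu$ of garbled structures. Indeed, the paper explicitly discusses the standard-measure/dilatation route you take (Cartier--Fell--Meyer, Strassen) and deliberately avoids it, partly because of the step you gloss over: not every bounded concave $f$ on the simplex arises as $V_c$ with finite $\mathbb{U}$ --- only minima of finitely many affine functions do --- so before invoking the dilation theorem you need the (standard, but missing) approximation argument that such piecewise-linear concave functions already determine the convex order.

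The genuine flaw, however, is the direction bookkeeping, and it is not cosmetic because the dilation-to-garbling translation is exactly where the direction matters. Since $V_c$ is \emph{concave} and ``more informative'' means \emph{lower} optimal cost, (ii) gives $\int V_c\,dP_1 \le \int V_c\,dP_2$, i.e.\ $P_1$ dominates $P_2$ in the convex order; your displayed inequality $\int f\,dP_1 \ge \int f\,dP_2$ for concave $f$ is reversed, and consequently so is your dilation: the correct conclusion from Cartier--Fell--Meyer/Strassen is that $P_1$ is a dilation of $P_2$ (a barycenter-preserving kernel pushes $P_2$ to $P_1$; each $Q_2$-posterior is a barycenter of $Q_1$-posteriors), not that $P_2$ is a dilation of $P_1$. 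A kernel pushing $P_1$ to $P_2$, as you assert, is equivalent (after the very translation step you describe) to $Q_1$ being a garbling of $Q_2$ --- the opposite of what is claimed --- so the final extraction of $Q'$ with $Q_2(\cdot|x)=\sum_y Q'(\cdot|y)Q_1(y|x)$ does not follow from what precedes it. With the inequality and dilation directions fixed, the remaining finite bookkeeping (handling repeated posteriors, null symbols, stochasticity of $Q'$) is indeed classical, but as written the chain proves the wrong-way garbling, and that step needs to be redone carefully rather than waved at.
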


That (i) implies (ii) \sy{for general spaces} follows from \sy{Proposition \ref{useful}}, which is an immediate finding in statistical decision theory,  and Jensen's inequality \sy{\cite[Theorem 4.3.2]{YukselBasarBook}.  We also note that this result will hold, and the proof will follow in an identical manner, if the player is allowed to use randomized policies, i.e. $u = \gamma(y, \omega)$, where $\omega$ is an independent noise variable.}

The converse, ii) implies i), is significantly more challenging. For the case with general spaces, related results are attributed to \cite{CBoll}, and \cite{cartier1964comparaison}, \cite{strassen1965existence}, which relate an ordering of information structures in terms of dilatations and their relation with comparisons under concave functions defined on conditional probability measures. A very concise yet informative review is in \cite[p. 130-131]{LeCamReview} and a more comprehensive review is in \cite{torgersen1991comparison}. We will present a direct proof that will be utilized in our main result of the paper and present a comparative discussion.

\subsection{Comparison of information structures in zero-sum game problems}

Now, consider a zero-sum game generalization of the problem above, with two decision makers. 

Consider a two-agent setup as follows.
\begin{eqnarray*}
y^i&=& g^i(x,v^i), \quad i=1,2,
\end{eqnarray*}
where the noise variables $v^1$ and $v^2$ are independent. Suppose that $g^i$ induces a channel $Q^i$ for $i=1,2$ as described earlier and DM $i$ has only access to $y^i$. Let ${\bf \underline{\gamma}} = \{\sy{{\gamma}^1,{\gamma}^2}\}$ denote the measurable policies of the agents. 

Given fixed $\mathbb{X}$, $\mathbb{Y}^1$, $\mathbb{Y}^2$, and $\zeta$ such that $x \sim \zeta$, a game \sy{$G = (c, \mathbb{U}^1, \mathbb{U}^2)$} is a triple of a measurable and bounded cost function \sy{$c: \mathbb{X} \times \mathbb{U}^1 \times \mathbb{U}^2 \rightarrow \mathbb{R}$} and action spaces for each player $\mathbb{U}^1, \mathbb{U}^2$.

We will impose one of the following conditions on the information structures. We note that Assumption \ref{infoStrucConditionSpecific} implies  Assumption \ref{infoStrucCondition}, but this assumption often allows for a simpler interpretation.  That this implication holds is a consequence of the independent measurements reduction formulation to be explained in detail later in the paper (see Theorem \ref{Equil}). The results will be presented under the more general Assumption \ref{infoStrucCondition}. 

\begin{assumption}\label{infoStrucCondition}
The information structure is absolutely continuous with respect to a product measure:
\[P(dy^1, dy^2,dx) \ll \bar{Q}^1(dy^1)\bar{Q}^2(dy^2)\sy{\zeta}(dx),\]
for reference probability measures $\bar{Q}^i$, $i=1,2$. That is, there exists an integrable $f$ which satisfies for every Borel $A, B, C$
\[P(y^1 \in B, y^2 \in C, x \in A) = \int_{A,B,C} f(x,y^1,y^2) \sy{\zeta}(dx)\bar{Q}^1(dy^1)\bar{Q}^2(dy^2)\sy{.}\]
\end{assumption}

\begin{assumption}\label{infoStrucConditionSpecific}
The following conditional independence (or Markov) condition holds: 
\begin{equation} P(dy^1, dy^2,dx) = Q^1(dy^1|x)Q^2(dy^2|x)\sy{\zeta}(dx)\sy{.}\nonumber \end{equation} 
where the measurements of agents are absolutely continuous so that for $i =1,2$, there exists a non-negative function $f^i$ and a reference probability measure $\bar{Q}^i$ such that for all Borel $S$:
    \begin{align*}
         Q^i(y^i \in S | x)  = \int_S f^i (y^i, x)\bar{Q}^i(dy^{i})\sy{.}\nonumber
    \end{align*}
\end{assumption}

%

Let the joint measure \sy{$P(dy^1, dy^2,dx)$} define the {\it information structure} for the game and let us denote this with $\mu$. For a zero-sum game with the conditional independence assumption in \sy{Assumption \ref{infoStrucConditionSpecific}}, an information structure $\mu$ consists of private information structures $\mu^1$ and $\mu^2$ defined with $Q^i, i=1,2$. Define $\mu^i$ as the joint probability measure induced on $\mathcal{P}(\mathbb{X} \times \mathbb{Y}^i)$ by measurement channel $Q^i$ with input distribution $\sy{\zeta}(dx)$. \sy{For our analysis, we will allow policies to be randomized with independent randomness. Which is to say, the set of all admissible measurable policies $\Gamma^i$ will be the set of all measurable functions $\gamma^i$, where $u^i = \gamma^i(y^i, \omega^i)$ for some independent noise variable $\omega^i$. Admissible randomized policies are stochastic kernels from $\mathbb{Y}^i$ to $\mathbb{U}^i$.} Under conditional independence, let us define the following cost functional for a single-stage setup:
\begin{eqnarray*}
&&J(\sy{\zeta},\mu^1,\gamma^1,\mu^2,\gamma^2)= E^{Q^1,Q^2,{\bf \underline{\gamma}}}_{\sy{\zeta}}\big[ c(x,u^1,u^2)\big]  \\
   && =\int_{\mathbb{X} \times \mathbb{Y}} c(x,\gamma^1(y^1),\gamma^2(y^2)) Q^1(dy^1|x)Q^2(dy^2|x) \sy{\zeta}(dx)\sy{.}
\end{eqnarray*}

Suppose that DM $1$ (the minimizer) wishes to minimize the cost and DM $2$ (the maximizer) wishes to maximize the cost. Let $\gamma^1$ and $\gamma^2$ be defined as earlier for each decision maker.  

\begin{definition} Given an information structure $\mu$, we say that $\gamma^{1,*}, \gamma^{2,*}$ is an equilibrium for the zero-sum game if
\begin{eqnarray*}\inf_{\gamma^1 \in \Gamma^1} J(\sy{\zeta},\mu^1,\gamma^{1},\mu^2,\gamma^{2,*}) &=& J(\sy{\zeta},\mu^1,\gamma^{1,*},\mu^2,\gamma^{2,*}) \\ &=& \sup_{\gamma^2 \in \Gamma^2}  J(\sy{\zeta},\mu^1,\gamma^{1,*},\mu^2,\gamma^2). \nonumber\end{eqnarray*}
\end{definition}
Let $V_{G}^{\mu}(\gamma^{1}, \gamma^{2})$ be the expected value of the cost function $c$ for the maximizer, for some game $G$, given information structure $\mu$ and strategies $(\gamma^{1}, \gamma^{2})$ for the minimizer and maximizer, respectively:
\begin{align*}
V^{\mu}_{G}(\gamma^{1}, \gamma^{2}) = \int c(x, \gamma^1(y^1), \gamma^2(y^2))Q^1(dy^1|x)Q^2(dy^2|x)\sy{\zeta}(dx)\sy{.}\nonumber
\end{align*}

Let $V^{*}(G, \mu)$ be $V_{G}^{\mu}(\gamma^{1}, \gamma^{2})$ where $(\gamma^{1}, \gamma^{2})$ are chosen to be the equilibrium strategies for the players. 

\begin{definition}
For fixed $\mathbb{X}, \mathbb{Y}^1, \mathbb{Y}^2$ and $\zeta$ such that $x \sim \zeta$, we say that an information structure $\mu$ is \textit{better for the maximizer} than information structure $\sy{\nu}$ (written as $\sy{\nu} \lesssim \mu$) over all games in a class of games $\mathbb{G}$ if and only if for all games $G$ in $\mathbb{G}$:
\begin{align*}
V^{*}(G, \mu) \geq V^{*}(G, \sy{\nu})\sy{.}\nonumber    
\end{align*}
\end{definition}

\begin{definition}\label{garbling2}
We denote by $\kappa^i \mu$ the information structure in which player $i$'s information from $\mu$ is \textit{garbled} by a stochastic kernel $\kappa^i$. \sy{We let {-i} denote the other player in the game.} Explicitly, this means the information structure becomes:
\begin{equation}
    (\kappa^{i}\mu)(B,dy^{-i}, dx)  = \int_{\mathbb{Y}^i}\kappa^{i}(B|y^i)\mu(dy^i,dy^{-i},dx), \: B \in \mathcal{B}(\mathbb{Y}^i)\sy{.} \nonumber
\end{equation}
\end{definition}

We use $K^i$ to denote the space of all such stochastic kernels $\kappa^i$ for player $i$. 

\begin{theorem} [P\k{e}ski \cite{Peski}]\label{PeskiTheorem} Let $\mathbb{X}, \mathbb{Y}^1, \mathbb{Y}^2$ be finite. For any two information structures $\mu$ and $\sy{\nu}$, $\mu$ is better for the maximizer than $\sy{\nu}$ over all games with finite action spaces $\mathbb{U}^1, \mathbb{U}^2$ if and only if there exist kernels $\kappa^i \in K^i, \sy{i = 1, 2}$, such that
\begin{align*}
\kappa^{\sy{1}} \sy{\nu} &= \kappa^{\sy{2}}\mu, 
\end{align*}
In particular, under Assumption \ref{infoStrucConditionSpecific}, we have the more explicit characterization with
\[\kappa^{\sy{1}}Q^1_\sy{\nu} = Q^1_\mu \quad \textnormal{and} \quad Q^2_\sy{\nu} = \kappa^{\sy{2}}Q^2_\mu.\] 
Where $Q^i_\mu$ and $Q^i_\sy{\nu}$ are the measurement channels for player $i$ under information structures $\mu$ and $\sy{\nu}$, respectively. 
\end{theorem}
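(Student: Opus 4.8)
The plan is to prove the two implications separately; the ``if'' direction is a short value-monotonicity argument, while the ``only if'' direction is the substantive part and will go through a separating-hyperplane construction. Throughout I would use that, for finite $\mathbb{X},\mathbb{Y}^1,\mathbb{Y}^2$ and finite action spaces, every game $G$ has a value $V^*(G,\cdot)$ attained at a saddle point in behavioural strategies (classical minimax on the compact convex sets of strategy kernels), so that $V^*(G,\cdot)$ equals both $\inf_{\gamma^1}\sup_{\gamma^2}J$ and $\sup_{\gamma^2}\inf_{\gamma^1}J$.

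For sufficiency, suppose $\kappa^1\nu=\kappa^2\mu=:\lambda$ with $\kappa^i\in K^i$. I would first check that garbling the maximizer's information can only lower the value: a behavioural strategy $\gamma^2$ for player $2$ in $\kappa^2\mu$ composes with $\kappa^2$ into a behavioural strategy $y^2\mapsto\int\gamma^2(\cdot\,|\,\tilde y^2)\,\kappa^2(d\tilde y^2|y^2)$ in $\mu$ which induces the same joint law of $(x,y^1,u^2)$, hence the same payoff against any $\gamma^1$; taking $\sup_{\gamma^2}$ and then $\inf_{\gamma^1}$ gives $V^*(G,\kappa^2\mu)\le V^*(G,\mu)$. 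The mirror-image argument for the minimizer gives $V^*(G,\kappa^1\nu)\ge V^*(G,\nu)$. Chaining, $V^*(G,\nu)\le V^*(G,\lambda)\le V^*(G,\mu)$ for every such $G$, i.e.\ $\nu\lesssim\mu$. Under Assumption~\ref{infoStrucConditionSpecific}, if $\kappa^1 Q^1_\nu=Q^1_\mu$ and $Q^2_\nu=\kappa^2 Q^2_\mu$, then both $\kappa^1\nu$ and $\kappa^2\mu$ are the conditionally independent structure with channels $(Q^1_\mu,Q^2_\nu)$, so that case reduces to the general one.

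For necessity, assume $\nu\lesssim\mu$ over all games with finite action spaces. Regarding information structures as points of the finite simplex $\mathcal P(\mathbb{Y}^1\times\mathbb{Y}^2\times\mathbb{X})$, set $A:=\{\kappa^2\mu:\kappa^2\in K^2\}$ and $B:=\{\kappa^1\nu:\kappa^1\in K^1\}$; since $\kappa^i\mapsto\kappa^i\cdot$ is affine and each $K^i$ is compact and convex, $A$ and $B$ are compact and convex, and it suffices to show $A\cap B\neq\emptyset$. Suppose not; then there is $h:\mathbb{Y}^1\times\mathbb{Y}^2\times\mathbb{X}\to\mathbb{R}$ with $\sup_{a\in A}\int h\,da<\inf_{b\in B}\int h\,db$. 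I would then build the game $G^*$ with $\mathbb{U}^1=\mathbb{Y}^1$, $\mathbb{U}^2=\mathbb{Y}^2$ and $c^*(x,u^1,u^2)=h(u^1,u^2,x)$. Under $\mu$, pinning the minimizer to the identity map makes a behavioural strategy of the maximizer exactly a kernel $\kappa^2\in K^2$, with induced law $\kappa^2\mu$, so $V^*(G^*,\mu)\le\sup_{\kappa^2}\int h\,d(\kappa^2\mu)=\sup_{a\in A}\int h\,da$; dually, under $\nu$, pinning the maximizer to the identity makes a behavioural strategy of the minimizer exactly a kernel $\kappa^1\in K^1$, with induced law $\kappa^1\nu$, so $V^*(G^*,\nu)\ge\inf_{\kappa^1}\int h\,d(\kappa^1\nu)=\inf_{b\in B}\int h\,db$. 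Hence $V^*(G^*,\mu)<V^*(G^*,\nu)$, contradicting $\nu\lesssim\mu$. Thus $A\cap B\neq\emptyset$, and any common point furnishes the kernels; the explicit form under Assumption~\ref{infoStrucConditionSpecific} then follows because garbling one player preserves conditional independence, so $\kappa^1\nu=\kappa^2\mu$ forces equality of the (a.s.\ unique) conditional channels of each player given $x$.

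The hard part will be the necessity direction, and within it the key idea is the choice of action spaces together with \emph{pinning one player to the identity}, which is what converts the separating functional $h$ into the two one-sided value bounds; once that is in place, the hyperplane and compactness/convexity bookkeeping is routine in finite dimensions. A second point requiring care is that the constructed $G^*$ genuinely has a value (a saddle point exists) --- this is immediate here from finiteness, but it is precisely the ingredient whose standard Borel analogue will have to be established separately for the general theorem.
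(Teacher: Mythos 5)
Your proof is correct for the finite statement, but it follows a genuinely different route from the one the paper uses (the paper cites P\k{e}ski for Theorem~\ref{PeskiTheorem} and proves its generalization, Theorem~\ref{Infinite}, via Lemmas~\ref{ForwardLemma} and~\ref{ConverseLemma}). For sufficiency, the paper perturbs the equilibrium strategies and invokes the forward direction of Blackwell's single-player ordering on a reduced cost $\tilde{c}$, whereas you simulate the garbled player's strategy directly by composing with $\kappa^i$, which is more elementary and bypasses Blackwell's theorem entirely. For necessity, the paper derives two one-sided equilibrium inequalities, reduces each to a single-player decision problem (via games where the opponent's action space is a singleton), applies the partial converse to Blackwell's ordering (Theorem~\ref{ConverseBlackwellOrig}, itself proved by a separating-hyperplane argument) separately to each player's \emph{marginal} information structure, and then combines the two resulting garbling relations into $\kappa^1\nu=\kappa^2\mu$; you instead apply one separation directly in the joint space to the compact convex sets $A=\{\kappa^2\mu\}$ and $B=\{\kappa^1\nu\}$ and convert the separating function into a single game with $\mathbb{U}^i=\mathbb{Y}^i$, pinning each player to the identity in turn to get the two value bounds. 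Your route buys two things: it is entirely self-contained in finite dimensions, and it yields the equality of \emph{joint} distributions $\kappa^1\nu=\kappa^2\mu$ in one step, which is cleaner for correlated measurement structures where passing from the two marginal garbling statements to the joint identity requires some care; the paper's heavier route is organized so that each ingredient (existence of equilibria via the $w$-$s$ topology, and the Blackwell converse) survives the passage to standard Borel spaces, which is the paper's real target. Your handling of the explicit characterization under Assumption~\ref{infoStrucConditionSpecific} (garbling one player preserves conditional independence, and equality of joints forces a.s.\ equality of the conditional channels given $x$) is also correct; and, as you note, the only ingredient of your argument that does not transfer for free beyond the finite case is the existence of a value for $G^*$, which is exactly what the paper's Theorem~\ref{Equil} supplies.
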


In this paper we will obtain a standard Borel generalization of this result. 

\subsection{Team Theoretic Setup}

For completeness, we also discuss the team theoretic setup in our review. 

\sy{Lehrer, Rosenberg and Shmaya extended Blackwell's ordering of information structures to team problems in finite-space settings for various solution concepts, including Nash equilibrium and several forms of \textit{correlated} equilibrium, in \cite{LehrerRosenbergShmaya}. For these results to hold, various degrees of correlation between the players' private signals is allowed. These solution concepts for correlated equilibrium are adopted from \cite{Forges}, which builds on ideas first introduced in \cite{Aumann1987}. These provide an ordering of information structures for static stochastic team problems. Related results are discussed in \cite[Chapter 4]{YukselBasarBook}. Recently, advances have been made in understanding the topological properties of strategic measures in team problems in \cite{YukselSaldiSICON17}.} 


\subsection{Contributions}

In this paper, we will derive a standard Borel counterpart of Theorem \ref{PeskiTheorem}. While obtaining our results, we will also derive conditions for the existence of saddle points in Bayesian zero-sum games in standard Borel spaces, as well as a converse theorem to Blackwell's ordering of information structures in the infinite setup. 

The contributions of this paper are as follows:

\begin{itemize}

\item[(i)] We will derive a standard Borel counterpart of Theorem \ref{PeskiTheorem} characterizing an ordering of information structures for zero-sum games (Theorem \ref{Infinite}).

\item[(ii)] We present two supporting results: (a) As a minor technical contribution, we present sufficient conditions for the existence of saddle points in Bayesian zero-sum games with incomplete information in standard Borel spaces (Theorem \ref{Equil}). This will build on placing an appropriate topology on the space of policies adopted by the decision makers. Our analysis generalizes existing results in the literature, notably \cite{milgrom1985distributional} and \cite{mertens2015repeated}, though as we note in the paper our generalization is rather technical and the conditions in \cite{milgrom1985distributional} are nearly equivalent to ours. (b) As a further supporting theorem, we will present a partial converse to Blackwell's ordering theorem for standard Borel spaces, using a separating hyperplane argument and properties of locally convex spaces (Theorem \ref{ConverseBlackwellOrig}). This presents an explicit, self-sufficient derivation for a converse theorem to be utilized in our main theorem, though related comprehensive results have been reported in the literature, as we note in the paper.

\end{itemize}

\section{Supporting Results on Existence of Saddle-Points and Comparison for Zero-Sum Games with Standard Borel Spaces}

\subsection{On Existence of Saddle-Points and Equilibria}

Prior to focusing in on the ordering of information structures, we present a supporting result regarding when equilibrium solutions to zero-sum games exist. In the finite case, equilibrium solutions always exist \cite{VonNeumann} (through e.g. \cite[Theorem 4.4]{basols99}), but this does not hold true in general \cite{IntegerGame}. Theorem \ref{Equil} below gives sufficient conditions for equilibrium solutions to exist for games with incomplete information. 

The existence of a value for games with incomplete information has been studied rather extensively. For readers' convenience, and as a direct proof, we present the result below; to our knowledge our statement and conditions have not been stated in the prior literature, though results nearly equivalent to ours have been noted rather indirectly: Most notably, \sy{Milgrom and Weber} present an existence result for more general games in \sy{\cite[Theorem 1]{milgrom1985distributional}, which} presents conditions whose generality is difficult to interpret: a careful look at condition R1 in \cite[p. 625]{milgrom1985distributional} leads to the conclusion that the authors have nearly (but not exactly) the same condition (ii) we note below; that is continuity of the cost function in the actions for every fixed hidden state variable $x$ is sufficient, though the statements given in \cite{milgrom1985distributional} imposes conditions that are not conclusive on this; we attribute this to the fact that the authors utilize \cite[Prop. 1(c)]{milgrom1985distributional} without establishing its relation to item (ii) below (due to the measurability requirement in the statement of \cite[Prop. 1(c)]{milgrom1985distributional}). Our analysis affords the simplicity and generality in the condition, since we build on the $w$-$s$ topology, rather than weak topology and directly Lusin's theorem \cite{Dud02} as followed in \cite{milgrom1985distributional} (we also note that the relation between weak and $w$-$s$ topologies on probabilities defined on product spaces with a fixed marginal can in fact be established using Lusin's theorem). Hence, in a strict sense, our conditions are more direct and general as stated.

The comprehensive book \cite[Proposition III.4.2.]{mertens2015repeated} imposes continuity in all the variables (unlike what is presented below). Furthermore,  \cite[Proposition III.4.2.]{mertens2015repeated} builds on a topology construction on policies which is different from what we present here; regarding the construction in \cite{mertens2015repeated} we would like to caution that in the absence of absolute continuity conditions on the information structure, this construction may lead to a lack of closedness on the sets of admissible policies (or {\it strategic measures}) as the counterexample \cite[Theorem 2.7]{YukselSaldiSICON17} reveals: in this counterexample, which would reduce to the setup studied here with $y^1=y^2=y$, a sequence of policies is constructed so that for each element of the sequence\sy{,} the action variables of the two decision makers are conditionally independent given their measurements, but the setwise (and hence, weak) limit of the sequence is not conditionally, or otherwise, independent; and thus the limit measure does not belong to the original information structure. For a more detailed discussion, we refer the reader to \cite[Section 7.2]{saldiyukselGeoInfoStructure}.



\begin{theorem}[Existence of Equilibria]\label{Equil}
For a given game, assume that Assumption \ref{infoStrucCondition} holds. Further, let the following hold.
\begin{enumerate}[nosep]
    \item[(i)] The action spaces of players, $\mathbb{U}^1, \mathbb{U}^2$, are compact.
    \item[(ii)] The cost function $c$ is bounded and continuous in players' actions, for every state of nature $x$.
\end{enumerate}    
Then an equilibrium exists under possibly randomized policies, and so there exists a value of the zero-sum game. 
\end{theorem}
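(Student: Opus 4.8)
The plan is to reduce the game, via an independent-measurements reduction, to a bilinear zero-sum game over spaces of probability measures, and then invoke a minimax theorem together with a compactness argument for the $w$-$s$ topology. Under Assumption~\ref{infoStrucCondition} write $P(dy^1,dy^2,dx)=f(x,y^1,y^2)\,\zeta(dx)\bar Q^1(dy^1)\bar Q^2(dy^2)$. A randomized policy for player~$i$ is a stochastic kernel $\Pi^i$ from $\mathbb{Y}^i$ to $\mathbb{U}^i$, specified $\bar Q^i$-a.s.; I would identify it with the probability measure $P^i(dy^i,du^i):=\bar Q^i(dy^i)\,\Pi^i(du^i\mid y^i)$, an element of $\mathcal{M}^i:=\{P\in\mathcal{P}(\mathbb{Y}^i\times\mathbb{U}^i):P(\cdot\times\mathbb{U}^i)=\bar Q^i\}$, and conversely every element of $\mathcal{M}^i$ disintegrates into an admissible randomized policy. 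It is precisely absolute continuity that makes this faithful parametrization possible, by letting us pin the measurement marginal to the reference measure $\bar Q^i$ and thereby avoid the closedness pathologies of \cite{YukselSaldiSICON17}. Substituting and applying Fubini, the cost becomes the bilinear functional
\[
J(P^1,P^2)=\int_{\mathbb{Y}^1\times\mathbb{U}^1}\int_{\mathbb{Y}^2\times\mathbb{U}^2}\tilde c(y^1,u^1,y^2,u^2)\,P^1(dy^1,du^1)\,P^2(dy^2,du^2),
\]
where $\tilde c(y^1,u^1,y^2,u^2):=\int_{\mathbb{X}}c(x,u^1,u^2)\,f(x,y^1,y^2)\,\zeta(dx)$ is measurable in $(y^1,y^2)$, continuous in $(u^1,u^2)$ for fixed $(y^1,y^2)$ by dominated convergence, and dominated by $\|c\|_\infty\,\rho(y^1,y^2)$ for $\rho$ the $L^1(\bar Q^1\otimes\bar Q^2)$ density of the $(y^1,y^2)$-marginal of $P$.

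Next I would equip each $\mathcal{M}^i$ with the $w$-$s$ topology, the coarsest topology rendering $P\mapsto\int h\,dP$ continuous for every bounded $h$ measurable in $y^i$ and continuous in $u^i$. On this topology $\mathcal{M}^i$ is convex and, since $\mathbb{U}^i$ is compact and the $\mathbb{Y}^i$-marginal is the fixed probability $\bar Q^i$, it is compact by the standard compactness theorem for Young measures. I would then show $J$ is separately $w$-$s$ continuous: for fixed $P^2$, the relevant test integrand $h(y^1,u^1):=\int\tilde c(y^1,u^1,y^2,u^2)\,P^2(dy^2,du^2)$ is measurable in $y^1$ and continuous in $u^1$, but only $L^1(\bar Q^1)$-dominated rather than bounded, so I would truncate: for $P^1_n\to P^1$ in $w$-$s$, the bounded part $(h\wedge N)\vee(-N)$ passes to the limit by definition of $w$-$s$, while the remainder is controlled uniformly in $n$ because its integral against $P^1_n$ is bounded by a quantity depending only on the fixed marginal $\bar Q^1$ and tending to $0$ as $N\to\infty$. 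The argument in the second variable is symmetric.

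Finally, $J$ is affine in each argument and separately continuous on the product $\mathcal{M}^1\times\mathcal{M}^2$ of convex compact sets, so Sion's minimax theorem gives $\inf_{P^1}\sup_{P^2}J=\sup_{P^2}\inf_{P^1}J$; moreover $P^1\mapsto\sup_{P^2}J(P^1,P^2)$ is lower semicontinuous and $P^2\mapsto\inf_{P^1}J(P^1,P^2)$ is upper semicontinuous, so both are attained on the compact sets, producing a saddle point $(P^{1,*},P^{2,*})$ whose disintegrations are equilibrium randomized policies and whose common value is the value of the game. I expect the main obstacle to be the topological step --- proving $w$-$s$ compactness of the reduced policy sets and separate continuity of $J$ against the only-integrable reduced cost $\tilde c$ --- and this is exactly where Assumption~\ref{infoStrucCondition} is used: fixing the measurement marginals at $\bar Q^i$ both identifies the policy sets with full Young-measure spaces (hence compact) and makes the truncation estimates uniform.
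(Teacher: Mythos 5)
Your proposal is correct and follows essentially the same route as the paper: an independent-measurements reduction under Assumption \ref{infoStrucCondition}, identification of randomized policies with measures on $\mathbb{Y}^i\times\mathbb{U}^i$ having fixed marginal $\bar{Q}^i$, compactness and separate continuity in the $w$-$s$ (equivalently weak) topology, bilinearity, and a minimax theorem yielding a saddle point. The only differences are cosmetic: you invoke Sion rather than Ky Fan, integrate out $x$ at the start, and make explicit (via the truncation/uniform-integrability estimate) the step the paper handles by citing Sch\"al and Balder for $w$-$s$ convergence against merely integrable Carath\'eodory integrands.
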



\begin{proof}
\textit{\textbf{Step (1)}}: By Assumption \ref{infoStrucCondition}, we can reformulate the problem in a new probability space in which the measurements are independent from the unknown variable $x$. This reformulation, called an {\it independent-measurements reduction}, is essentially due to Witsenhausen \cite{WitsenhausenIan}, with a detailed discussion in \cite[Section 2.2.]{YukselWitsenStandardArXiv}, see Figure \ref{L1}.


\begin{figure}[h]
\centering
\epsfig{figure=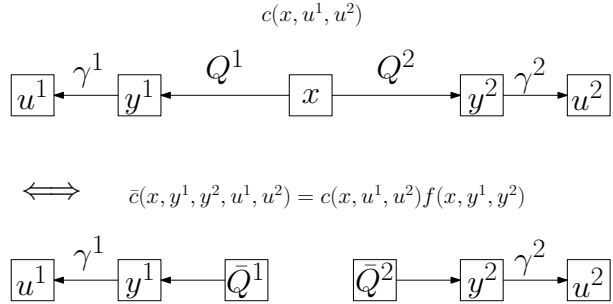,height=4cm,width=8cm}
\caption{Reformulation of two information structures (with respect to an independent measurements reduction) \label{L1}}
\end{figure}

%

The main benefit of this approach is to define a compact/convex policy space for the players (e.g. see \cite[Section 2.2]{YukselSaldiSICON17}). To complete this reformulation, we note the following holds for some function $f$ and reference probability measures $\bar{Q}^i$:
\begin{align*}
    &P(dx, dy^1, dy^2, du^1, du^2) = \zeta(dx)f(x,y^1,y^2)\bar{Q}^1(dy^1)1_{\{\gamma^1(y^1)\in du^1\}}\bar{Q}^2(dy^2)1_{\{\gamma^2(y^2)\in du^2)\}}\sy{.} \nonumber
\end{align*}
where $1_{\{\cdot\}}$ is the indicator function. Thus, the value function for the game can be written as:
\begin{equation}
    V^{\mu}_{G}(\gamma^1, \gamma^2) = \int f(x,y^1,y^2)c(x, u^1, u^2)\bar{Q}^1(dy^1)\bar{Q}^2(dy^2)\zeta(dx)\sy{.} \nonumber
\end{equation}
We then create a new cost function $c(x,u^1,u^2,y^1,y^2) = c(x,u^1,u^2)f(x,y^1,y^2)$.

\textit{\textbf{Step (2)}}: 
Let $x \in \mathbb{X}$ be the random state of nature. Let $\gamma^1, \gamma^2$ be the policies for the players, and $u^1, u^2$ be the resulting actions chosen by the players. We allow for policies $\gamma^i$ where $u^i$ is chosen in a random way, i.e. $u^i = \gamma^i(y^i, \omega^i)$, where $\omega^i$ is some $[0,1]$-valued independent random variable (we note that any randomized policy, defined as a stochastic kernel from $\mathbb{Y}^i$ to $\mathbb{U}^i$, admits such a stochastic realization; see \cite[Lemma 1.2]{gihman2012controlled}, or \cite[Lemma 3.1]{BorkarRealization}).

\textit{\textbf{Step (3)}}: Let $c$ be the reformulated cost function of this game, under the new product probability measure, we have: 
%
\begin{align*}
    &V_{G}^{\mu}(\gamma^1, \gamma^2) = \int c(x, u^1, u^2, y^1, y^2) (\Bar{Q}^1\gamma^1)(dy^1, du^1)(\Bar{Q}^2\gamma^2)(dy^2, du^2) \zeta(dx)\sy{.}\nonumber
\end{align*}
Here, $(\Bar{Q}^1\gamma^1)(dy^1, du^1)$ and $(\Bar{Q}^2\gamma^2)(dy^2, du^2)$ are the probability measures induced on the measurement and the action variables. By independence due to the reduction, we can consider the expected cost as a function of the reduced-form policies: $V_{G}^{\mu}(\gamma^1, \gamma^2) = V_{G}^{\mu}(\Bar{Q}^1\gamma^1, \Bar{Q}^2\gamma^2)$. 
Now, without loss of generality, we fix $\Bar{Q}^1\gamma^1$, allowing us to express the above equation in the following form:
\begin{align*}
&V_{G}^{\mu}(\bar{Q}^1\gamma^1, \bar{Q}^2\gamma^2) = \int (\Bar{Q}^2\gamma^2)(dy^2, du^2)\int c(x, u^1, u^2, y^1, y^2)(\Bar{Q}^1 \gamma^1)(dy^1, du^1)\zeta(dx)\sy{.}\nonumber
\end{align*}

Let $\int c(x, u^1, u^2, y^1, y^2)(\Bar{Q}^1 \gamma^1)(dy^1, du^1)\zeta(dx)$ be defined as $\Bar{c}(u^2, y^2)$.

Now that we have an independent-measurements reduction, we will (similar to the analysis from \cite{milgrom1985distributional, BorkarRealization, YukselSaldiSICON17}), identify, almost surely, every admissible policy with a probability measure on the product space: we adopt the view that, given game $(G, \mu)$, $\bar{Q}^i\gamma^i$ is a probability measure on $\mathbb{Y}^i \times \mathbb{U}^i$ with fixed marginal $\bar{Q}^i(dy^i)$ on $\mathbb{Y}^i$. Let $\Gamma^i$ denote the space of all such measures since every $\bar{Q}^i\gamma^i$ can be identified with an element in $\Gamma^i$ almost surely. The pairing of an information structure and a policy induces a probability measure $P$ on the five-tuple: $(\mathbb{X}, \mathbb{Y}^1, \mathbb{Y}^2, \mathbb{U}^1, \mathbb{U}^2)$, with 

\begin{equation} P(dx, dy^1, dy^2, du^1, du^2) = \gamma^1(du^1|y^1)\gamma^2(du^2|y^2)Q^1(dy^1|x)Q^2(dy^2|x)\zeta(dx).\nonumber
\end{equation}

This construction allows us to obtain a proper topology to work with for spaces of policies with desirable convexity and compactness properties. 


We now recall the $w$-$s$ topology \cite{schal1975dynamic} on the set of probability measures ${\cal P}(\mathbb{X} \times \mathbb{U})$; this is the coarsest topology under which $\int f(x,u) \nu(dx,du): {\cal P}(\mathbb{X} \times \mathbb{U}) \to \mathbb{R}$ is continuous for every measurable and bounded $f$ which is continuous in $u$ for every $x$ (but unlike weak topology, $f$ does not need to be continuous in $x$). \sy{We note that functions which are continuous in one argument and measurable in the other are sometimes referred to as Carath\'eodory functions.} Now, since the exogenous variables are fixed, weak convergence in this setting is equivalent to $w$-$s$ convergence (see \cite{YukselWitsenStandardArXiv}), and continuity in the exogenous variable is not needed here. Consider a sequence of actions $(\bar{Q}^{2}\gamma^{2})_m (dy^{2}, du^{2})$ which converges to $(\bar{Q}^{2}\gamma^{2})(dy^{2}, du^2)$ weakly. We have that $\Bar{c}(u^2, y^2)$ is continuous in $u^2$. Since $\mu$ is fixed, the marginals on $\mathbb{Y}^2$ are fixed. Therefore, by \cite[Theorem 3.10]{schal1975dynamic} (or \cite[Theorem 2.5]{balder2001}), we can use the $w$-$s$ topology on the set of probability measures $\mathcal{P}(\mathbb{Y}^2 \times \mathbb{U}^2)$. And so we have continuity of $V^{\mu}_G(\bar{Q}^1\gamma^1, \bar{Q}^2\gamma^2)$ in $\bar{Q}^2\gamma^2$ in the $w$-$s$ topology and, by the equivalence in this setting, the weak topology.

This also holds for continuity in $\bar{Q}^1\gamma^1$ in the reverse case where we fix $\bar{Q}^2\gamma^2$. Therefore, in general, we have that $V_{G}^{\mu}(\cdot, \cdot)$ is continuous in $(\Bar{Q}^i \gamma^i)$ when $(\Bar{Q}^{-i} \gamma^{-i})$ is fixed.

\textit{\textbf{Step (4)}}: Let $\Gamma = \{\lambda \in \mathcal{P}(\mathbb{Y} \times \mathbb{U}): \lambda_{\mathbb{Y}} = \bar{Q}\}$ be our reduced policy space, where $\bar{Q}$ is the fixed marginal of the measure $\lambda$ on $\mathbb{Y}$. Following from \cite[Section 2.1]{YukselSaldiSICON17}, the space of all $\bar{Q}^i\gamma^i$ (which we denote by $\Gamma^i$) is compact under weak convergence. 

\textit{\textbf{Step (5)}}: We observe that $V_{G}^{\mu}(\Bar{Q}^1\gamma^1, \Bar{Q}^2\gamma^2)$ is linear and hence is both concave and convex in each entry. For completeness, we establish this linearity result. Take $\theta \in (0, 1)$. Then, without loss of generality, we fix $\Bar{Q}^1\gamma^1$ and obtain the following:
\begin{align*}
 & V_{G}^{\mu}(\Bar{Q}^1\gamma^1, \theta\Bar{Q}^2\gamma^2 + (1 - \theta)\Tilde{Q}^2\Tilde{\gamma}^2) \\ = &\int (\theta\Bar{Q}^2\gamma^2 + (1 - \theta)\Tilde{Q}^2\Tilde{\gamma}^2)(dy^2, du^2) \int_{y^2, u^2} \Bar{c}(x, u^2, y^2) \\
 = &\int (\theta\Bar{Q}^2\gamma^2)(dy^2, du^2)\int_{y^2, u^2} \Bar{c}(x, u^2, y^2) + \int(1 - \theta)(\Tilde{Q}^2\Tilde{\gamma}^2)(dy^2, du^2) \int_{y^2, u^2} \Bar{c}(x, u^2, y^2) \\
    = &\theta V_{G}^{\mu}(\Bar{Q}^1\gamma^1,\Bar{Q}^2\gamma^2) + (1 - \theta) V_{G}^{\mu}(\Bar{Q}^1\gamma^1, \Tilde{Q}^2\Tilde{\gamma}^2)\sy{.} \nonumber
\end{align*}

Lastly, we recall that, under the weak topology, the space of probability measures is a metric space, and thus our spaces $\Gamma^i$ are Hausdorff spaces. 

Since $V_{G}^{\mu}(\Bar{Q}^1\gamma^1, \Bar{Q}^2\gamma^2)$ is continuous, and convex/concave in the compact Hausdorff spaces $\Gamma^i$, we have the following equality \cite[Theorem 1]{Fan}:
\begin{align*}
    \min_{Q^1\gamma^1}\max_{Q^2\gamma^2} V_{G}^{\mu}(Q^1\gamma^1, Q^2\gamma^2) = \max_{Q^2\gamma^2}\min_{Q^1\gamma^1}V_{G}^{\mu}(Q^1\gamma^1, Q^2\gamma^2)\sy{.}\nonumber
\end{align*}

This establishes a (saddle-point) equilibrium for the game.
\end{proof}

Thus, we have obtained an existence result for the value of the games considered, and also provided an approach to topologize and convexify/compactify the policy spaces.

\sy{For completeness, as a standalone contribution to the literature, we also present the following theorem, which is a mild relaxation of the theorem above, removing the requirements for the {\it independent-measurements} reduction. However we will work with the {\it independent-measurements} reduction for the rest of the paper, and so the theorem above is sufficient for the main results of this paper. The proof follows similarly to Theorem \ref{Equil}}. 

\sy{\begin{theorem}[Existence of Equilibria with a Further Relaxation]\label{Equil2}
For a given game, assume the following hold.
\begin{enumerate}[nosep]
    \item[(i)] The action spaces of players, $\mathbb{U}^1, \mathbb{U}^2$, are compact.
    \item[(ii)] The cost function $c$ is bounded and continuous in players' actions, for every state of nature $x$.
\end{enumerate}    
Then an equilibrium exists under possibly randomized policies, and so there exists a value of the zero-sum game. 
\end{theorem}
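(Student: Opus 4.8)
The plan is to follow the proof of Theorem \ref{Equil} line by line, but to replace the independent-measurements reduction (which needed Assumption \ref{infoStrucCondition}) by a direct disintegration of the general information structure $\mu$, described by a joint probability measure $P(dx,dy^1,dy^2)$ on the standard Borel product, against each player's own measurement. The point is that, even without a product/Markov structure, the per-player strategic-measure space still carries a fixed marginal and hence remains compact and convex, and the $w$-$s$ topology still delivers coordinatewise continuity of the value functional.

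First I would set up the strategic-measure description. Writing $P^i$ for the marginal of $P$ on $\mathbb{Y}^i$, I identify each admissible randomized policy $\gamma^i$ (a stochastic kernel from $\mathbb{Y}^i$ to $\mathbb{U}^i$) with the probability measure $\lambda^i(dy^i,du^i):=\gamma^i(du^i|y^i)P^i(dy^i)$ on $\mathbb{Y}^i\times\mathbb{U}^i$. Since all spaces are standard Borel, disintegration makes this a bijection up to $P^i$-a.s.\ equivalence (cf.\ \cite[Lemma 1.2]{gihman2012controlled}, \cite[Lemma 3.1]{BorkarRealization}), so the reduced policy space is $\Gamma^i:=\{\lambda\in\mathcal{P}(\mathbb{Y}^i\times\mathbb{U}^i):\lambda_{\mathbb{Y}^i}=P^i\}$. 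Exactly as in Theorem \ref{Equil} and \cite[Section 2.1]{YukselSaldiSICON17}, each $\Gamma^i$ is convex, and since $\mathbb{U}^i$ is compact and $P^i$ is tight, $\Gamma^i$ is tight and closed under weak convergence, hence weakly compact and metrizable (so Hausdorff).

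Next I would establish continuity of the value functional in each coordinate. Fixing $\lambda^2$ (equivalently $\gamma^2$), Fubini's theorem together with a regular conditional probability $P(dx,dy^2\,|\,y^1)$, which exists in the standard Borel setting, yields $V^{\mu}_{G}(\gamma^1,\gamma^2)=\int_{\mathbb{Y}^1\times\mathbb{U}^1}\bar c^{\gamma^2}(y^1,u^1)\,\lambda^1(dy^1,du^1)$, where
\[\bar c^{\gamma^2}(y^1,u^1):=\int_{\mathbb{X}\times\mathbb{Y}^2}\Bigl(\int_{\mathbb{U}^2}c(x,u^1,u^2)\,\gamma^2(du^2|y^2)\Bigr)P(dx,dy^2\,|\,y^1).\]
Because $c$ is bounded and continuous in the actions, dominated convergence shows that $\bar c^{\gamma^2}$ is bounded, measurable in $y^1$, and continuous in $u^1$ for every $y^1$ — a Carath\'eodory function, hence jointly measurable. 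Since the $\mathbb{Y}^1$-marginal of every $\lambda^1\in\Gamma^1$ is fixed, the $w$-$s$ topology on $\mathcal{P}(\mathbb{Y}^1\times\mathbb{U}^1)$ coincides with the weak topology on $\Gamma^1$ (see \cite{YukselWitsenStandardArXiv}), so by \cite[Theorem 3.10]{schal1975dynamic} (or \cite[Theorem 2.5]{balder2001}) the map $\lambda^1\mapsto V^{\mu}_{G}$ is weakly continuous on $\Gamma^1$; the symmetric argument gives continuity in $\lambda^2$ on $\Gamma^2$. Moreover $\bar c^{\gamma^2}$ does not depend on $\lambda^1$, so $V^{\mu}_{G}$ is affine — hence both convex and concave — in each coordinate, exactly as in Step (5) of the proof of Theorem \ref{Equil}.

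Finally, $V^{\mu}_{G}:\Gamma^1\times\Gamma^2\to\mathbb{R}$ is, for each fixed argument, continuous and affine on the compact convex Hausdorff set $\Gamma^i$, so Fan's minimax theorem \cite[Theorem 1]{Fan} gives $\min_{\lambda^1}\max_{\lambda^2}V^{\mu}_{G}=\max_{\lambda^2}\min_{\lambda^1}V^{\mu}_{G}$ with the extrema attained, which is the asserted saddle-point equilibrium and value. The only genuinely new point relative to Theorem \ref{Equil} is the disintegration step: one must verify that $\bar c^{\gamma^2}$ is indeed a bounded Carath\'eodory function in the absence of any product structure on $P$, so that the $w$-$s$ continuity criterion applies; this — establishing measurability of $y^1\mapsto P(dx,dy^2\,|\,y^1)$ and pushing it through two dominated-convergence arguments — is the main, though routine, obstacle.
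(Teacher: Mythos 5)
Your proposal is correct and follows essentially the same route as the paper's own sketch: disintegrate $\mu$ against each player's measurement, identify policies with measures on $\mathbb{Y}^i\times\mathbb{U}^i$ having the fixed marginal $\mu_{\mathbb{Y}^i}$, verify that the reduced cost is a bounded Carath\'eodory function so the $w$-$s$/weak equivalence gives coordinatewise continuity, and conclude via compactness, linearity, and Fan's minimax theorem. Your writing of the conditional as the joint regular conditional probability $P(dx,dy^2\,|\,y^1)$ is in fact slightly more careful than the paper's shorthand $\zeta(dx)\mu(dy^2|dy^1)$, but the argument is the same.
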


\begin{proof}[Sketch.]
Here, we let $\mu_{\mathbb{Y}^i}$ denote the marginal of the information structure $\mu$ on $\mathbb{Y}^i$. We will combine our policies with these marginals to form the product measures $\mu_{\mathbb{Y}^1}(dy^1)\gamma^1(du^1|y^1)$ and $\mu_{\mathbb{Y}^2}(dy^2)\gamma^2(du^2|y^2)$ on the players' measurement and action spaces. We will denote these measures by $(\mu_{\mathbb{Y}^1}\gamma^1)$ and $(\mu_{\mathbb{Y}^2}\gamma^2)$

Similar to the previous proof, without loss of generality, we fix Player 2's strategy $\gamma^2$. Then we have:
\begin{equation}
  V_{G}^{\mu}(\gamma^1, \gamma^2) = \int \mu_{\mathbb{Y}^1}(dy^1)\gamma^1(du^1|y^1)\left(\int_{\mathbb{X} \times \mathbb{Y}^2 \times \mathbb{U}^2} \zeta(dx)\mu(dy^2 | dy^1) \gamma^2(du^2|y^2) c(x,u^1,u^2)\right) \nonumber
\end{equation}

We observe that we can equivalently write $V^{\mu}_G$ as a function of the product measures $(\mu_{\mathbb{Y}^1}\gamma^1)$ and $(\mu_{\mathbb{Y}^2}\gamma^2)$, since the marginals of $\mu$ are fixed. 

Let $\int_{\mathbb{X} \times \mathbb{Y}^2 \times \mathbb{U}^2} \zeta(dx)\mu(dy^2 | dy^1) \gamma^2(du^2|y^2) c(x,u^1,u^2)$ be defined as $\Bar{c}(u^1,y^1)$.

We can observe that, by assumption, $\bar{c}(u^1, y^1)$ is bounded, and is continuous in $u^1$. Furthermore, it is also evident that $\bar{c}(u^1, y^1)$ is measurable in $y^1$. By the same arguments of \textbf{Step (2)} of the preceding theorem, via the machinery of the $w$-$s$ topology \cite{schal1975dynamic},
 we can show that $\bar{c}(u^1, y^1)$ is continuous in $(\mu_{\mathbb{Y}^1}\gamma^1)$ under $w$-$s$ convergence, and thus also under weak convergence. 
This also holds for continuity in $(\mu_{\mathbb{Y}^2}\gamma^2)$ in the reverse case where we fix $\gamma^1$. 


Following from \cite[Section 2.1]{YukselSaldiSICON17}, the space of all $\mu_{\mathbb{Y}^i}^i\gamma^i$ is compact under weak convergence, and we can observe that $V_{G}^{\mu}(\mu_{\mathbb{Y}^1}\gamma^1, \mu_{\mathbb{Y}^2}\gamma^2)$ is linear and hence is both concave and convex in each entry.

The existence of a (saddle-point) equilibrium for the game then follows by \cite[Theorem 1]{Fan}. 
\end{proof}
}

\subsection{On a Partial Converse to Blackwell Ordering in the Standard Borel Setup}

In addition to requiring conditions for the existence of equilibrium solutions in the infinite case, we need to address the extension of Blackwell's ordering of information structures to the infinite case, as this will form a key aspect of the proof of the main result of this paper, Theorem \ref{Infinite}. 

Here, we present a partial converse to Blackwell's theorem. 

The forward direction to Blackwell's theorem holds in the infinite case (see \cite[Theorem 4.3.2]{YukselBasarBook}), i.e. when $\mathbb{X}$, $\mathbb{Y}$ are standard Borel spaces for a single-player setup, $\sy{\nu}$ being a stochastically degraded version of $\mu$ implies that $\mu$ is more informative than $\sy{\nu}$ over all single-player decision problems with standard Borel action spaces and bounded cost functions that are continuous in the player's action for every state of nature. 

As noted earlier, related results were presented by C. Boll in 1955 in an unpublished thesis paper \cite{CBoll}. Le Cam presents a summary of these results in \cite{LeCamReview}, with a detailed review reported in \cite{torgersen1991comparison}. The approach in the literature often builds on the construction of {\it dilatations} of conditional probability measures, which is related to Blackwell's comparison of experiments theorem through what is known as the Blackwell-Sherman-Stein theorem. A detailed comparative analysis is provided further below. Our main contribution here is an explicit converse compatible with the conditions on existence results presented in the previous section and a comparison to be presented in the next section. This result serves as a supporting step with a direct proof; the results reported in the literature are often very technical and the explicit implication for our setup is not evident {\it a priori} as we discuss in the next subsection.

\sy{We note that our setup differs slightly from that of Blackwell in \cite{Blackwell}, contributing to the fact that this is a \textit{partial} converse to Blackwell's result. In Blackwell's original setup with finite $\mathbb{X}$, information structures could be compared over different priors on $\mathbb{X}$ as the comparison would apply uniformly to all such prior measures that satisfy a positivity condition on each of the finitely many outcomes. In our setup, since the space is possibly uncountable, we consider a fixed prior measure on $\mathbb{X}$.} 


\begin{theorem}\label{ConverseBlackwellOrig}
Let us consider a single player whose goal is to minimize the value of the cost function \sy{$c$} for a set of single-player decision problems. We assume the measurement is absolutely continuous in the following sense: there exists a function $f$ and a reference probability measure $\bar{Q}$ such that for all Borel $S$:
    \begin{align*}
         P(y \in S | x)  = \int_S f(y, x)\bar{Q}(dy)\sy{.}\nonumber
    \end{align*}
 If $\mathbb{Y}$ is compact and an information structure $\mu$ is more informative than another information structure $\sy{\nu}$ over all single-player decision problems with compact standard Borel action spaces and bounded cost functions \sy{$c: \mathbb{X} \times \mathbb{U} \rightarrow \mathbb{R}$} that are continuous in $u$ for every $x$, then $\sy{\nu}$ must be a garbling of $\mu$ in the sense of Definition \ref{garbling}.
\end{theorem}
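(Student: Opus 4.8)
The plan is to realize $\nu$ as a garbling of $\mu$ by a Hahn--Banach separation argument, exploiting the fact that a garbling kernel from $\mathbb{Y}$ to $\mathbb{Y}$ is nothing but a randomized policy for a decision problem whose action space is $\mathbb{Y}$. Identify every information structure with a probability measure on $\mathbb{X}\times\mathbb{Y}$ having $\mathbb{X}$-marginal $\zeta$, let $K$ denote the set of stochastic kernels from $\mathbb{Y}$ to $\mathbb{Y}$, and set
\[
D_\mu:=\{\kappa\mu:\ \kappa\in K\}\subseteq\mathcal{P}(\mathbb{X}\times\mathbb{Y}),
\]
with $\kappa\mu$ as in Definition \ref{garbling2}. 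By Definition \ref{garbling}, ``$\nu$ is a garbling of $\mu$'' is precisely ``$\nu\in D_\mu$'', so it suffices to show $\nu\in D_\mu$; I would argue by contradiction, assuming $\nu\notin D_\mu$. Note that parametrizing $D_\mu$ directly through garbling kernels, rather than through a conditional-independence constraint on triples $(x,y',y)$, sidesteps the non-closedness phenomenon cautioned against after Theorem \ref{Equil}.

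First I would show that $D_\mu$ is a compact convex subset of $\mathcal{P}(\mathbb{X}\times\mathbb{Y})$ in the $w$-$s$ topology (which, the $\mathbb{X}$-marginal being fixed at $\zeta$, coincides here with the weak topology). Convexity is immediate: a convex combination of garbling kernels is a garbling kernel and $\kappa\mapsto\kappa\mu$ is affine. For compactness, identify each $\kappa\in K$ with the joint law $\mu_{\mathbb{Y}}(dy')\kappa(dy|y')$ on $\mathbb{Y}\times\mathbb{Y}$, where $\mu_{\mathbb{Y}}$ is the $\mathbb{Y}$-marginal of $\mu$; the set of such laws is exactly the set of probability measures on $\mathbb{Y}\times\mathbb{Y}$ with first marginal $\mu_{\mathbb{Y}}$, which is $w$-$s$ (equivalently weakly) compact because $\mathbb{Y}$ is compact --- this is the argument already used in Step (4) of the proof of Theorem \ref{Equil}. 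Writing the measurement channel of $\mu$ as $Q_\mu(dy|x)=f(y,x)\bar{Q}(dy)$ and $\mu_{\mathbb{Y}}(dy)=h(y)\bar{Q}(dy)$ with $h(y)=\int f(y,x)\zeta(dx)$ (so $\int \tfrac{f(y',x)}{h(y')}\zeta(dx)=1$ for $\mu_{\mathbb{Y}}$-a.e.\ $y'$), one has, for bounded $\phi$ continuous in $y$,
\[
\int\phi\,d(\kappa\mu)=\int_{\mathbb{Y}\times\mathbb{Y}}\Big(\int_{\mathbb{X}}\phi(x,y)\tfrac{f(y',x)}{h(y')}\,\zeta(dx)\Big)\mu_{\mathbb{Y}}(dy')\kappa(dy|y'),
\]
and the inner integrand is bounded by $\|\phi\|_\infty$ and, by dominated convergence, continuous in $y$ for every $y'$ --- exactly a $w$-$s$ test function on $\mathbb{Y}\times\mathbb{Y}$. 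Hence $\kappa\mapsto\kappa\mu$ is $w$-$s$-continuous, so $D_\mu$, being the continuous image of a compact set, is compact (in particular closed). This is the place where both the compactness of $\mathbb{Y}$ and the absolute-continuity hypothesis are genuinely used.

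Next I would separate. Since $\nu\notin D_\mu$ and $D_\mu$ is a compact convex subset of the locally convex space $(\mathcal{M}(\mathbb{X}\times\mathbb{Y}),\,w\text{-}s)$, the Hahn--Banach theorem yields a bounded, measurable $\phi:\mathbb{X}\times\mathbb{Y}\to\mathbb{R}$ that is continuous in $y$ for every $x$ --- these are exactly the continuous linear functionals for the $w$-$s$ topology --- with $\int\phi\,d\nu>\sup_{P\in D_\mu}\int\phi\,dP$. Consider the single-player decision problem with action space $\mathbb{U}=\mathbb{Y}$ (a compact standard Borel space) and cost $c(x,u):=-\phi(x,u)$, which is bounded and continuous in $u$ for every $x$, hence admissible. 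Since a randomized policy is precisely a stochastic kernel from the measurement space to $\mathbb{U}=\mathbb{Y}$, i.e.\ a garbling kernel, the optimal value of this problem under an information structure $\lambda$ equals $\inf_{\kappa\in K}\int c\,d(\kappa\lambda)=-\sup_{\kappa\in K}\int\phi\,d(\kappa\lambda)$. Thus ``$\mu$ is more informative than $\nu$'' (Definition \ref{moreinformative}) reads
\[
-\sup_{\kappa\in K}\int\phi\,d(\kappa\mu)\ \le\ -\sup_{\kappa\in K}\int\phi\,d(\kappa\nu)\ \le\ -\int\phi\,d\nu,
\]
the last step because the identity kernel gives $\nu\in D_\nu$. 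Equivalently $\sup_{P\in D_\mu}\int\phi\,dP=\sup_{\kappa\in K}\int\phi\,d(\kappa\mu)\ge\int\phi\,d\nu$, contradicting the strict inequality from the separation. Hence $\nu\in D_\mu$, i.e.\ $\nu$ is a garbling of $\mu$.

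The main obstacle is the compactness/closedness of $D_\mu$: this is where compactness of $\mathbb{Y}$ and absolute continuity do the real work, and where one must be careful to parametrize by garbling kernels (equivalently, by joint laws on $\mathbb{Y}\times\mathbb{Y}$ with fixed first marginal) rather than by conditional-independence constraints, which are not closed. A secondary technical point is the identification of the $w$-$s$ topological dual with the class of bounded functions that are measurable and continuous in $y$, so that the separating functional is genuinely realizable as an admissible cost function for a decision problem in the allowed class.
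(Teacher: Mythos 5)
Your proposal is correct and follows essentially the same route as the paper's proof: argue by contradiction, show the set of garblings $\{\kappa\mu:\kappa\in K\}$ is convex and compact in the $w$-$s$/weak topology (using compactness of $\mathbb{Y}$ and the fixed marginals), apply Hahn--Banach separation with the dual identified as bounded functions measurable in $x$ and continuous in $y$, and then realize the separating functional as an admissible cost with $\mathbb{U}=\mathbb{Y}$ so that garbling kernels are exactly randomized policies, contradicting more-informativeness. The only differences are cosmetic: you obtain compactness by pushing forward the set of joint laws on $\mathbb{Y}\times\mathbb{Y}$ with fixed first marginal through $\kappa\mapsto\kappa\mu$ (writing the conditional density explicitly via the absolute-continuity assumption), whereas the paper lifts to $\mathbb{X}\times\mathbb{Y}\times\mathbb{Y}$, proves closedness of the limit garbling, and projects; and you work with infima directly rather than invoking existence of optimal policies.
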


\begin{proof}
We note that under the conditions of the theorem, an optimal policy (which is also deterministic) exists for every information structure (see Theorem 3.1 in \cite{YukselOptimizationofChannels}).

\textit{\textbf{Step (1)}}: Let $\zeta$ be the fixed probability distribution on $\mathbb{X}$ for any given decision problem in our set. Take information structures $\mu, \sy{\nu} \in \{\mathcal{P}(\mathbb{X} \times \mathbb{Y}) : P_{\mathbb{X}} = \zeta\}$, where $\mu$ is more informative than $\sy{\nu}$ in Blackwell's sense (i.e. $J(\zeta, \mu, \gamma^*) \geq J(\zeta, \sy{\nu}, \gamma^*)$ over all games with bounded cost functions \sy{$c$} that are continuous in $u$). 

Take the space $K \mu$, a subset of $\mathcal{P}(\mathbb{X} \times \mathbb{Y})$, to be the space of all possible garblings of $\mu$, where the garblings are from $\mathbb{Y}$ to $\mathbb{Y}$.

\textit{\textbf{Step (2)}}: We now establish the weak compactness of the space of all garbled information structures. 
First, observe that the set of all induced garblings on the product space (involving all of $K$) inducing probability measures of the form
\[\sy{P_{K}}(dx,dy,d\tilde{y}) := \mu(dx,dy)K(d\tilde{y}|y)\sy{,}\]
 leads to a weakly pre-compact space in the space of probability measures on $\mathbb{X} \times \mathbb{Y} \times \mathbb{Y}$. If closedness can also be established, this would lead to a weakly compact space.
 This follows from the proof of \cite[Theorem 5.6]{YukselWitsenStandardArXiv}: since the marginals on $\mathbb{X} \times \mathbb{Y}$ are fixed, any limit of a weakly converging \sy{sequence} will also satisfy the property that the limit is a garbling of the original information structure. For completeness, we present the following: With $P_K(dx,dy,d\tilde{y}) = K(d\tilde{y}|y)\mu(dx,dy)$, consider a weakly converging sequence $P_{K_n}(dx,dy,d\tilde{y})$. We will show that the weak limit also admits such a garbled structure. Let $P_{K_n}(dx,dy,d\tilde{y})$ converge weakly to $P(dx,dy,d\tilde{y})$. Then, for every continuous and bounded $g$
 \[ \int g(x,y,\tilde{y})P_K(dx,dy,d\tilde{y}) =  \int \bigg( \int g(x,y,\tilde{y})\mu(dx | dy) \bigg) P_{K_n}(dy,d\tilde{y})\sy{.} \]
Since the marginal on $y$ is fixed, even though the function $\int g(x,y,\tilde{y})\mu(dx | dy)$ is only measurable and bounded in $y$ and is continuous in $\tilde{y}$, $w$-$s$ convergence is equivalent to the weak convergence of $P_{K_n}(dy,d\tilde{y})$ and as a result we have that
\[\int \bigg( \int g(x,y,\tilde{y})\mu(dx | dy) \bigg) P_{K_n}(dy,d\tilde{y}) \to \int \bigg( \int g(x,y,\tilde{y})\mu(dx | dy) \bigg) P(dy,d\tilde{y})\sy{.}  \]
As a result, $P$ decomposes as $P(dx,dy,d\tilde{y}) = \mu(dx,dy)\tilde{K}(d\tilde{y}|y)$ for some $\tilde{K}$. This establishes the weak compactness of the garbled information structure in the product space $\mathbb{X} \times \mathbb{Y} \times \mathbb{Y}$.

Now, take the projection of this space onto the measures on the first and the third coordinate; as a continuous image of a weakly compact set, this map will also be compact and gives us our space $K\mu$. 

Finally, $K\sy{\mu}$ is convex, since the space of stochastic kernels is convex. As a result, the space $K \mu$ of all possible garblings of $\mu$ is a convex and compact subset of ${\cal P}(\mathbb{X} \times \mathbb{Y})$ under the weak convergence topology. 

Now, assume there does not exist a stochastic kernel $\kappa \in K$ such that:
\begin{align*}
    \sy{\nu} = \kappa\mu \sy{.}\nonumber
\end{align*}
Which is to say, we assume $\sy{\nu}$ is not a garbling of $\mu$ and proceed with a proof by contradiction. Then, $K\mu \: \cap \: \sy{\nu} = \sy{\nu}$. That is, $\sy{\nu} \notin K \mu$. \\
\textit{\textbf{Step (3)}}: 
We now use the Hahn-Banach Separation Theorem for Locally Convex Spaces by treating the space of probability measures $\mathcal{P}(\mathbb{X} \times \mathbb{Y})$ as a locally convex space of measures (see \cite[Theorem 3.4]{rudin1991functional}). As such, since our spaces $K \mu$ and $\sy{\{\nu\}}$ are subsets of this space and are convex, closed and compact, in addition to being disjoint, we can separate them using a continuous linear map from $\mathcal{P}(\mathbb{X} \times \mathbb{Y}$) to $\mathbb{R}$. 

To apply \cite[Theorem 3.4]{rudin1991functional}, we require local convexity of $\mathcal{P}(\mathbb{X} \times \mathbb{Y})$, and so we define the locally convex space of probability measures with the following notion of convergence: We say that $\nu_n \rightarrow \nu$ if $\int f(x,y) \nu_n(dx,dy) \rightarrow \int f(x,y) \nu(dx,dy)$ for every measurable and bounded function which is continuous in $y$ for every $x$. We note that our measures must still have fixed marginal $\zeta$ on $\mathbb{X}$.

Since continuous and bounded functions {\it separate} probability measures (in the sense that, if the integrations of two measures with respect to continuous functions are equal, the measures must be equal), it follows from \cite[Theorem 3.10]{rudin1991functional} that we can represent every continuous linear map on $\mathcal{P}(\mathbb{X} \times \mathbb{Y})$ \sy{using the form $\int f(x,y) \sy{\nu} (dx, dy)$}
for some measurable and bounded function $f(x,y)$ continuous in $y$ for every $x$. It also follows from \cite[Theorem 3.10]{rudin1991functional} that, given this notion of convergence, $\mathcal{P}(\mathbb{X} \times \mathbb{Y})$ is a locally convex space. 

Therefore, we have the following statement from combining \cite[Theorem 3.4]{rudin1991functional} and \cite[Theorem 3.10]{rudin1991functional}: there exists a measurable and bounded function (continuous in $y$) $f: \mathbb{X} \times \mathbb{Y} \rightarrow \mathbb{R}$ and constants $D_1, D_2 \in \mathbb{R}$ where $D_1 < D_2$ such that:
\begin{align*}
 \langle\sy{\nu}, f\rangle \leq D_1, \: \langle \kappa\mu, f\rangle \geq D_2, \: \: \forall \: \kappa \in K\sy{.} \nonumber
\end{align*}
Where we use the following notation:
\begin{align*}
    \langle \sy{\nu}, f \rangle  = \int_{\mathbb{X} \times \mathbb{Y}} f(x,y) \sy{\nu}(dx, dy) \sy{.}\nonumber
\end{align*}

This gives us the following inequality:
$\langle \sy{\nu}, f \rangle < \langle \kappa\mu, f \rangle, \:  \forall \kappa \in K$. 

\textit{\textbf{Step (4)}}: Now consider the class of decision problems with bounded cost functions continuous in the actions, with compact $\mathbb{Y}$, $\mathbb{U}$, where $\mathbb{U} = \mathbb{Y}$. This is clearly a subset of all decision problems considered so far in the proof. Now let $f(x,y)$ be the separating function found above. Consider a game in this particular subclass where $f(x,y)$ is the cost function (which is valid since $\mathbb{U} = \mathbb{Y}$ and $f(x,y)$ is bounded continuous in $y$). We note that $\langle \sy{\nu}, f \rangle$ gives the expected value of the game with cost function $f(x,y)$ under information structure $\sy{\nu}$ when the player plays the identity policy $\gamma^{id}(y) = y$. \sy{We can observe the following:}
\begin{align}
\int_{\mathbb{X} \times \mathbb{Y}}f(x,y)\sy{\nu}(dx,dy) <& \int_{\mathbb{X} \times \mathbb{Y}}f(x,y)\kappa\mu(dx,dy),  \: \: \forall \kappa \in K\sy{,} \nonumber 
\end{align}
and hence,
\sy{\begin{align}
\int_{\mathbb{X} \times \mathbb{Y}}f(x,y)\sy{\nu}(dx,dy) < & \inf_{\kappa \in K} \int_{\mathbb{X} \times \mathbb{Y}}f(x,y)\kappa\mu(dx,dy) \nonumber \\
=& \inf_{\kappa \in K}\int_{\mathbb{X} \times \mathbb{Y}} f(x,y')\int_{\mathbb{Y}}\kappa(dy'|y)\mu(dx,y) \nonumber \\
=& \inf_{\kappa \in K}\int_{\mathbb{X} \times \mathbb{Y}} f(x, \kappa(\cdot|y))\mu(dx,dy) \sy{.}\nonumber 
\end{align}}
\sy{Where we define:
\[f(x, \kappa(\cdot|y)) := \int_{\mathbb{Y}}f(x,y')\kappa(dy'|y)\]}
\sy{Recalling that $\kappa(\cdot|y)$ has a functional representation $\gamma(y) = g(y, \omega)$ for some independent noise variable $\omega$, and since $K$ is the space of all stochastic kernels from $\mathbb{Y}$ to $\mathbb{Y}$, we can observe that this gives us:
\begin{equation}
 \inf_{\kappa \in K}\int_{\mathbb{X} \times \mathbb{Y}} f(x, \kappa(\cdot|y))\mu(dx,dy) = \inf_{\gamma \in \Gamma}\int_{\mathbb{X} \times \mathbb{Y}}f(x, \gamma(y))\mu(dx,dy). \nonumber
\end{equation}
Since we allow for randomized policies, this minimization is equivalent to finding the optimal policy $\gamma^* \in \Gamma$ for the cost function $f(x,y)$ under information structure $\mu$. And so we have:}
\begin{equation}
    \int_{\mathbb{X} \times \mathbb{Y}}f(x,y)\sy{\nu}(dx,dy) = J(\sy{\zeta}, \sy{\nu}, \gamma^{id}) < J(\sy{\zeta}, \mu, \gamma^*) = \inf_{\kappa \in K}\int_{\mathbb{X} \times \mathbb{Y}} f(x, \kappa(y))\mu(dx,dy)\sy{.} \nonumber
\end{equation}
Since we have found a game where, when playing its optimal policy, $\mu$ performs worse than $\sy{\nu}$ does under some policy, we have contradicted the fact that $\mu$ is better than $\sy{\nu}$. Therefore, there must exist a $\kappa \in K$ such that $\sy{\nu} = \kappa\mu$, and so $\sy{\nu}$ is a garbling of $\mu$. 
\end{proof}

This result will allow us to use both directions of Blackwell's ordering of information structures in the standard Borel-type setup we are considering for players in zero-sum games. \\

\noindent{\bf Dilatations as measures for comparisons of experiments and Strassen's theorem.} Strassen, in \cite[Theorem 2]{strassen1965existence}, presents a related result that is often \sy{invoked} when comparison of experiments is studied in infinite dimensional probability spaces, although the direct implication on Blackwell's ordering (in the sense needed in our main result to be presented in the next section) is not explicit as we note in the following. Likewise, \sy{Cartier, Fell, and Meyer} relate an ordering of information structures in terms of dilatations (where the hidden variable $x$ does not appear explicitly in the analysis) \sy{in \cite{cartier1964comparaison}}. A very concise yet informative review is in \cite[p. 130-131]{LeCamReview}. A detailed discussion on comparisons of information structures along the same approach is present in the comprehensive book \cite{torgersen1991comparison}. Both for completeness as well as to compare the findings, we present a discussion in the following.


Let $\Omega$ be a convex compact metrizable subset of a locally convex topological vector space. For Borel probability measures $\mu$ and $\nu$ write $\mu \prec \nu$ if and only if for all $y \in \mathcal{S} = \{\textnormal{all  continuous concave functionals on} \: \Omega\}$
\begin{align*}
    \int y \: d\mu \geq \int y \: d\nu. \nonumber
\end{align*}
\begin{theorem}\label{Strassen} \cite[Theorem 2]{strassen1965existence}
 $\mu \prec \nu$ if and only if there is a dilatation P such that $\nu = P\mu$, where a dilatation $P$ is a Markov kernel from $\Omega$ to $\Omega$ such that for all continuous affine functions $z$ on $\Omega$, $zP = z$.
 \end{theorem}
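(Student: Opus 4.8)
The plan is to prove the two implications separately. The \emph{if} direction is an immediate consequence of Jensen's inequality, and all the real work is in the \emph{only if} direction, which I would obtain by producing a suitable coupling of $\mu$ and $\nu$ on $\Omega\times\Omega$ via a Hahn--Banach argument and then disintegrating it into the desired Markov kernel.

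For the easy direction, suppose $\nu=P\mu$ with $P$ a dilatation. Since $\Omega$ sits in a locally convex space, its continuous affine functionals separate points and determine barycenters, so the condition $zP=z$ for all continuous affine $z$ says exactly that $P(\,\cdot\mid\omega)$ has barycenter $\omega$ for every $\omega$. For a continuous concave $y$, fibrewise Jensen gives $\int y(\omega')\,P(d\omega'\mid\omega)\le y(\omega)$, and integrating against $\mu$ yields $\int y\,d\nu\le\int y\,d\mu$; that is, $\mu\prec\nu$.

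For the converse, assume $\mu\prec\nu$. I would first reduce to the existence of a probability measure $\pi$ on the compact metrizable space $\Omega\times\Omega$ with first marginal $\mu$, second marginal $\nu$, and $\int\bigl(z(\omega')-z(\omega)\bigr)h(\omega)\,\pi(d\omega,d\omega')=0$ for every continuous affine $z$ and every $h\in C(\Omega)$. Indeed, given such $\pi$, disintegrate it along the first coordinate (legitimate because a compact metric space is standard Borel) as $\pi(d\omega,d\omega')=\mu(d\omega)\,P(d\omega'\mid\omega)$: the second-marginal condition reads $P\mu=\nu$, while the last condition, tested against a countable dense family of continuous affine $z$ (the space of continuous affine functions on $\Omega$ is separable) together with a dense family of $h$'s, forces $P(\,\cdot\mid\omega)$ to have barycenter $\omega$ for $\mu$-a.e.\ $\omega$; redefining $P(\,\cdot\mid\omega):=\delta_\omega$ on the exceptional $\mu$-null set makes $P$ a genuine dilatation with $P\mu=\nu$.

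To build $\pi$ I would apply Hahn--Banach (this is essentially Strassen's general marginal theorem specialized to the present setting): in $C(\Omega\times\Omega)$ take the linear subspace $L$ spanned by the functions $(\omega,\omega')\mapsto g(\omega)+k(\omega')+h(\omega)\bigl(z(\omega')-z(\omega)\bigr)$, define the functional $\Lambda$ on $L$ by $\Lambda=\int g\,d\mu+\int k\,d\nu$, check it is well defined, and extend it to $C(\Omega\times\Omega)$ dominated by the sublinear functional $F\mapsto\sup_{\Omega\times\Omega}F$; the extension is then positive with $\Lambda(1)=1$, so Riesz representation gives the required $\pi$, whose marginal and barycenter properties one reads off the three types of generators. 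The main obstacle — and the only place the hypothesis is used — is verifying the domination $\Lambda\le\sup$ on $L$: after minimizing the generator over $\omega'$ for each fixed $\omega$, one is left with an inequality between a $\mu$-integral and a $\nu$-integral that must be closed using $\mu\prec\nu$ together with the concave (upper) envelope operator $k\mapsto\widehat k$, and the presence of the $\omega$-dependent affine perturbations $h(\omega)z$ in the cross term makes this estimate genuinely delicate (it is the technical heart of Strassen's theorem). A secondary, more routine point is the metrizability of $\Omega$, needed both for the disintegration and for the separability used in the $\mu$-a.e.\ clean-up; an alternative route would prove the statement first for $\nu$ supported on finitely many atoms by a combinatorial splitting and then pass to the general case by weak-$*$ compactness of the set of feasible couplings, but the limiting argument there is itself not entirely painless, so I would favour the Hahn--Banach proof.
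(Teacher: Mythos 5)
A point of context first: the paper does not prove this statement at all --- it is quoted from \cite[Theorem 2]{strassen1965existence} as background for the discussion of dilatations --- so there is no internal proof to compare yours against. Your strategy (Jensen for the easy direction; for the converse, a coupling on $\Omega\times\Omega$ with marginals $\mu,\nu$ and fibrewise barycenter constraints, built by Hahn--Banach and then disintegrated, with the $\mu$-a.e.\ clean-up via metrizability and a countable family of affine functions) is essentially Strassen's own duality route, and the reduction of the kernel statement to the existence of such a coupling is sound, as is the easy direction.

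The genuine gap is that the one step where the hypothesis $\mu\prec\nu$ actually enters --- the domination $\Lambda(F)\le\sup F$ on the subspace $L$ --- is only announced, not proved; you explicitly set it aside as ``genuinely delicate.'' Without it, neither the well-definedness of $\Lambda$ (which is itself deduced by applying the two-sided domination to representations of $0$) nor the Hahn--Banach extension goes through, so the hard direction is not established. For what it is worth, the estimate closes less painfully than you fear, via the upper concave envelope $\hat k(\omega)=\inf\{z(\omega)\colon z\ \text{affine continuous},\ z\ge k\}$: for fixed $\omega$ the cross terms contribute an affine function $w_\omega(\omega')=\sum_i h_i(\omega)z_i(\omega')$ of $\omega'$, and since $\widehat{k+w}=\hat k+w$ and $\sup f=\sup\hat f$, one gets $\sup_{\omega'}\bigl[k(\omega')+w_\omega(\omega')\bigr]-w_\omega(\omega)\ge\hat k(\omega)$ for every $\omega$; hence $\sup F\ge\sup_\omega\,(g+\hat k)(\omega)\ge\int(g+\hat k)\,d\mu\ge\int g\,d\mu+\int k\,d\nu$, where the last inequality uses $k\le\hat k$ together with $\int\hat k\,d\mu\ge\int\hat k\,d\nu$, which follows because $\hat k$ is a downward-directed infimum of continuous affine functions, $\mu$ and $\nu$ integrate every continuous affine function equally (apply $\mu\prec\nu$ to $\pm z$), and Radon measures on the compact metrizable $\Omega$ commute with such directed infima. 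Until an argument of this kind is written out, your proposal is an outline of Strassen's proof rather than a proof.
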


The condition $zP = z$ means that for any continuous affine function $z$ on $\Omega$:

\[\int_{\Omega}z(r)P(dr,\omega) = z(\omega), \quad \forall \omega \in \Omega.\]

Theorem \ref{Strassen} does not lead to a converse to Blackwell's theorem in the generality presented in Theorem \ref{ConverseBlackwellOrig}: Let $\Omega$ be the space of probability measures on $\mathbb{X}$. Let $\mu$ be an information structure that is more informative than another information structure $\sy{\nu} \:$ in Blackwell's sense. Let us restrict ourselves to decision problems where $\mathbb{U}$ is compact. Let $Q_\mu$ and $Q_{\sy{\nu}}$ be the measurement channels for the player under information structures $\mu$ and $\sy{\nu}$, respectively. By definition, we have for all measurable and bounded cost functions $c$ continuous in the actions:
\begin{align*}
\inf_{\gamma \in \Gamma}\int \sy{\zeta}(dx) Q_\mu(dy|x) c(x, \gamma(y)) \leq \inf_{\eta \in \Gamma}\int \sy{\zeta}(dx)Q_\sy{\nu}(dy|x)c(x, \eta(y))\sy{.} \nonumber
\end{align*}
\sy{Let $P^{\mu}(dy)Q(dx|y)$ be the alternative disintegration of the information structure $\mu$ following Bayes' rule. Likewise, perform the same disintegration for $\nu$. Then we can rewrite the above equation as (due to the measurable selection conditions as in the proof of Theorem 3.1 in \cite{YukselOptimizationofChannels}):
\begin{equation}\label{StrassenEqn}
  \int P^{\mu}(dy)(\inf_{u \in \mathbb{U}} \int Q(dx|y)c(x,u))  \leq \int P^{\sy{\nu}}(dy)(\inf_{u \in \mathbb{U}} \int Q(dx|y)c(x,u)) \sy{,}
\end{equation}
Now, we define:
\[\Pi^\mu(A) := \int_{\mathbb{Y}}P^{\mu}(dy)1_{Q^{\mu}(\cdot|A)}\]
We note that $\Pi^\mu$ is a probability measure on $\Omega$. Define $\Pi^{\nu}$ similarly. Then (\ref{StrassenEqn}) becomes
\[ \int \Pi^{\mu}(d\pi)(\inf_{u \in \mathbb{U}} \int \pi(dx) c(x,u))  \leq \int \Pi^{\nu}(d\pi)(\inf_{u \in \mathbb{U}}\int \pi(dx) c(x,u)),\]
with the interpretation that $\pi(dx)=Q(dx|y)$. Let $W^*(\pi) = \inf_{u \in \mathbb{U}} \int \pi(dx)c(x,u)$. Then we can rewrite this once again as:
\begin{align*}
    \int \Pi^{\mu}(d\pi)W^*(\pi) \leq \int \Pi^{\sy{\nu}}(d\pi)W^*(\pi)\sy{.} \nonumber
\end{align*}}
Since \sy{$\Pi^{\mu}$ and $\Pi^{\nu}$} give probability distributions on $\Omega$, and $W^*$ is a function over $\Omega$, we will have $\sy{\nu} \prec \mu$ in Strassen's sense if the above inequality holds for all continuous and concave functions over $\Omega$.

We can show that $W^*$ is continuous and concave in $\pi$ provided that additionally $c$ is continuous {\it both} in $x$ and $u$: Let $\pi_n \rightarrow \pi$ weakly. Let $u^{*}_n$ be optimal for $\pi_n$. Then:
\begin{align*}
    &|\int c(x, u^{*}_n) \pi_n(dx) - \int c(x, u^*)\pi(dx)| \\
    &\leq \max(\int c(x, u^{*}_n)(\pi_n(dx) - \pi(dx)), \int c(x, u^*)\pi_n(dx) - \pi(dx))\sy{.}
\end{align*}
We note that $\int c(x, u^{*}_n)(\pi_n(dx) - \pi(dx))$ goes to $0$ following \cite[Theorem 3.5]{Serfozo} or \cite[Theorem 3.5]{Lan81} (since the action space is compact, there always is a converging subsequence $u^*_{n_m} \to \bar{u}$ for some $\bar{u}$, and since for $x_n \to x$ we have that $c(x_{n_m},u^*_{n_m}) \to c(x,\bar{u})$ the result follows from a generalized convergence theorem under weak convergence). The second term converges to zero by the weak convergence of $\pi_n$ to $\pi$. We emphasize the requirement that $c$ is continuous in both $x$ and $u$, in Theorem \ref{ConverseBlackwellOrig} only continuity in $u$ was required (one can construct a simple counterexample, even when $\mathbb{U}$ is a singleton to show that continuity in $x$ is necessary for this argument to hold). Concavity of $W^*$ in the conditional measure $\pi(dx)$ follows from Proposition \ref{useful}. 

Now, if one can show that by using all bounded continuous cost functions $c$ and compact action spaces $\mathbb{U}$, the space of all continuous and concave functions on $\Omega$ is spanned by the space of all $W^*$ functions, then a converse can be attained through Strassen's result. We note here that every concave and upper semi-continuous $W$ can be written as an infimum of a family of affine functions (Fenchel-Moreau Theorem \cite{rockafellar1970convex}) and an analysis can be pursued towards this direction at least for the case where $c$ can be assumed to be continuous in both variables and the condition on $W$ is to be relaxed in Strassen's theorem. However, due to the conditions of upper semi-continuity of $W^*$ and the joint continuity of \sy{$c$} noted earlier in both the state and actions, the applicability of Strassen's theorem to our setup does not hold in the generality reported. 

In summary, our paper presents a general condition and a direct proof, while we recognize that Strassen's theorem and accordingly its proof could be further modified to allow for additional relaxations for arriving at a similar result. 

\section{Comparison of Information Structures for Zero-Sum standard Borel Bayesian Games}
We are now prepared to order information structures in the spirit of Theorem \ref{PeskiTheorem} for this standard Borel setup. We note that the following lemmas, theorem, and corollary also hold in the general finite case studied by P\k{e}ski, as they rely solely on the existence of equilibria (which are guaranteed to exist in the finite setup by von Neumann's min-max theorem, see \cite{VonNeumann}) and Blackwell's ordering of information structures. Therefore, these results also serve as a strict generalization of Theorem \ref{PeskiTheorem} to standard Borel Bayesian Games. We note here that the required absolute continuity conditions always hold for finite or countable spaces (in that one can always find a reference measure with respect to which all probability measures on a countable space is absolutely continuous).

\begin{definition}
 For fixed $\mathbb{X}$ with $x \sim \zeta$, and fixed $\mathbb{Y}^1, \mathbb{Y}^2$, we define a class of games $\Tilde{\mathbb{G}}_\zeta(\mathbb{X}, \mathbb{Y}^1, \mathbb{Y}^2)$ to be all games for which the players have compact action spaces and the cost function is bounded and continuous in players' actions for every state $x$. 
\end{definition}

\begin{lemma}\label{ForwardLemma}
Given fixed $\mathbb{X}$, $\zeta$, $\mathbb{Y}^1$, and $\mathbb{Y}^2$, for any information structure $\mu$ which satisfies Assumption 2.1 and any kernels $\kappa^i \in K^i$:
\begin{align*}
    \kappa^{\sy{2}}\mu \lesssim \mu \: \: \: \: \: and \: \: \: \: \: \mu \lesssim \kappa^{\sy{1}}\mu\sy{,}\nonumber
\end{align*}
over all games in $\Tilde{\mathbb{G}}_\zeta(\mathbb{X}, \mathbb{Y}^1, \mathbb{Y}^2)$. 
\end{lemma}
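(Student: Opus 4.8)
The plan is to reduce both orderings to a single elementary fact: a player can always \emph{self-simulate} a garbling of his own signal, so garbling shrinks that player's set of admissible (randomized) policies; feeding this into the min-max/max-min representation of the game value supplied by Theorem \ref{Equil} then forces monotonicity in exactly the stated direction. Morally this is the forward direction of Blackwell's theorem applied per player, but the cleanest route is the self-simulation argument together with the minimax equality, with no need to invoke Blackwell's theorem itself.

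First I would verify that the garbled information structures $\kappa^{1}\mu$ and $\kappa^{2}\mu$ again satisfy Assumption \ref{infoStrucCondition}, so that Theorem \ref{Equil} is applicable to them. Writing $\mu(dy^{1},dy^{2},dx)=f(x,y^{1},y^{2})\,\zeta(dx)\bar{Q}^{1}(dy^{1})\bar{Q}^{2}(dy^{2})$ and pushing $y^{1}$ through $\kappa^{1}$, one checks that $(\kappa^{1}\mu)(d\tilde{y}^{1},dy^{2},dx)$ is absolutely continuous with respect to $(\kappa^{1}\bar{Q}^{1})(d\tilde{y}^{1})\,\bar{Q}^{2}(dy^{2})\zeta(dx)$, and symmetrically for $\kappa^{2}$; indeed if a set $A$ is null for $\kappa^{1}\bar{Q}^{1}$ then $\kappa^{1}(A|y^{1})=0$ for $\bar{Q}^{1}$-a.e.\ $y^{1}$, which kills the corresponding piece of $\kappa^{1}\mu$. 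Consequently, for every game $G\in\tilde{\mathbb{G}}_\zeta(\mathbb{X},\mathbb{Y}^{1},\mathbb{Y}^{2})$ the values $V^{*}(G,\mu)$, $V^{*}(G,\kappa^{1}\mu)$, $V^{*}(G,\kappa^{2}\mu)$ exist by Theorem \ref{Equil}, and each satisfies $V^{*}(G,\cdot)=\inf_{\gamma^{1}}\sup_{\gamma^{2}}V^{\cdot}_{G}(\gamma^{1},\gamma^{2})=\sup_{\gamma^{2}}\inf_{\gamma^{1}}V^{\cdot}_{G}(\gamma^{1},\gamma^{2})$.

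For $\mu\lesssim\kappa^{1}\mu$, fix $G\in\tilde{\mathbb{G}}_\zeta$. Given any randomized policy $\tilde{\gamma}^{1}\in\Gamma^{1}$ for the minimizer in the game with information structure $\kappa^{1}\mu$, define the composed kernel $\gamma^{1}:=\tilde{\gamma}^{1}\circ\kappa^{1}$, i.e.\ $\gamma^{1}(du^{1}|y^{1})=\int_{\mathbb{Y}^{1}}\tilde{\gamma}^{1}(du^{1}|\tilde{y}^{1})\,\kappa^{1}(d\tilde{y}^{1}|y^{1})$; this is again a stochastic kernel from $\mathbb{Y}^{1}$ to $\mathbb{U}^{1}$, hence an admissible randomized policy (with an independent-randomness realization via the stochastic realization results already cited in the proof of Theorem \ref{Equil}). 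A one-line Fubini-type marginalization of $y^{1}$ out of the joint measure gives $V^{\mu}_{G}(\gamma^{1},\gamma^{2})=V^{\kappa^{1}\mu}_{G}(\tilde{\gamma}^{1},\gamma^{2})$ for every $\gamma^{2}\in\Gamma^{2}$, so $\inf_{\gamma^{1}\in\Gamma^{1}}V^{\mu}_{G}(\gamma^{1},\gamma^{2})\le\inf_{\tilde{\gamma}^{1}\in\Gamma^{1}}V^{\kappa^{1}\mu}_{G}(\tilde{\gamma}^{1},\gamma^{2})$ for each fixed $\gamma^{2}$. Taking $\sup_{\gamma^{2}}$ and using the max-min representation on both sides yields $V^{*}(G,\mu)\le V^{*}(G,\kappa^{1}\mu)$, i.e.\ $\mu\lesssim\kappa^{1}\mu$. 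The argument for $\kappa^{2}\mu\lesssim\mu$ is symmetric with the roles of the inequalities reversed: self-simulating the maximizer's garbling, for each $\tilde{\gamma}^{2}$ there is $\gamma^{2}:=\tilde{\gamma}^{2}\circ\kappa^{2}$ with $V^{\mu}_{G}(\gamma^{1},\gamma^{2})=V^{\kappa^{2}\mu}_{G}(\gamma^{1},\tilde{\gamma}^{2})$, hence $\sup_{\gamma^{2}}V^{\mu}_{G}(\gamma^{1},\gamma^{2})\ge\sup_{\tilde{\gamma}^{2}}V^{\kappa^{2}\mu}_{G}(\gamma^{1},\tilde{\gamma}^{2})$ for every $\gamma^{1}$, and taking $\inf_{\gamma^{1}}$ with the min-max representation gives $V^{*}(G,\mu)\ge V^{*}(G,\kappa^{2}\mu)$.

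The only genuine care needed is in the self-simulation step: confirming that $\tilde{\gamma}^{i}\circ\kappa^{i}$ is admissible in the precise sense used in the paper (a stochastic kernel $\mathbb{Y}^{i}\to\mathbb{U}^{i}$ admitting an independent-noise functional realization) and that the identity $V^{\mu}_{G}(\cdot\circ\kappa^{i},\cdot)=V^{\kappa^{i}\mu}_{G}(\cdot,\cdot)$ is exactly the marginalization of the garbled coordinate. I expect this to be routine given the realization machinery already in place; the rest is bookkeeping with the minimax equality, which is available precisely because Assumption \ref{infoStrucCondition} is preserved under garbling.
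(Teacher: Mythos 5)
Your proof is correct, but it takes a genuinely different route from the paper's. The paper fixes the saddle-point policies under both $\mu$ and $\kappa^{2}\mu$, perturbs the minimizer's equilibrium policy, freezes the other player's policy to build a reduced single-player cost $\tilde{c}$, and then invokes the (infinite-dimensional) forward direction of Blackwell's ordering to compare the maximizer's garbled and ungarbled channels; the chain $V_{G}^{\kappa^{2}\mu}(\gamma^1,\gamma^2)\le V_{G}^{\kappa^{2}\mu}(\eta^1,\gamma^2)\le V_{G}^{\mu}(\eta^1,\eta^2)$ then gives the claim. You instead prove the exact value identity $V^{\mu}_{G}(\tilde{\gamma}^{i}\circ\kappa^{i},\cdot)=V^{\kappa^{i}\mu}_{G}(\tilde{\gamma}^{i},\cdot)$ by composing the garbling kernel into the player's own randomized policy (legitimate since composition of stochastic kernels is a stochastic kernel and the marginal of $(x,y^{-i})$ is unchanged, so $\Gamma^{-i}$ is common to both games), and then conclude from the sup-inf and inf-sup representations supplied by Theorem \ref{Equil}; this is in effect a self-contained proof of the needed instance of Blackwell's forward direction rather than a citation of it. Your route buys two things: it sidesteps the reduced-cost construction (whose wording in the paper, that $\gamma^2$ is "still optimal" for $\tilde c$, is slightly loose and really only needs an inequality against the single-player supremum), and it makes explicit a point the paper leaves implicit, namely that $\kappa^{i}\mu$ again satisfies Assumption \ref{infoStrucCondition} so that Theorem \ref{Equil} applies to the garbled structure; your sectioning argument for that preservation is terse but correct and routine to complete (a jointly measurable Radon--Nikodym density with respect to $(\kappa^{1}\bar{Q}^{1})\otimes\bar{Q}^{2}\otimes\zeta$ exists by the standard kernel argument). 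The paper's route, in exchange, keeps Lemma \ref{ForwardLemma} structurally parallel to the converse Lemma that follows it, since both are phrased as comparisons of equilibrium policies; both approaches rely equally on Theorem \ref{Equil} for the existence of the values being compared.
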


\begin{proof}
Let us consider the first relation.

Take an arbitrary zero-sum game $G \in \Tilde{\mathbb{G}}_\zeta(\mathbb{X}, \mathbb{Y}^1, \mathbb{Y}^2)$ with cost function $c$ and action spaces $\mathbb{U}^1$ and $\mathbb{U}^2$. Let $(\gamma^{1}, \gamma^{2})$ be the Bayesian Nash equilibrium policies for the players under information structure $\kappa^{\sy{2}}\mu$ and $(\eta^{1}, \eta^{2})$ be the Bayesian Nash equilibrium policies under information structure $\mu$. By our assumption on \sy{$\Tilde{\mathbb{G}}_\zeta(\mathbb{X}, \mathbb{Y}^1, \mathbb{Y}^2)$}, these policies exist [Theorem \ref{Equil}]. Let $Q^{i}$ be the measurement channel for player $i$ under information structure $\mu$.

The expected value of the cost for the maximizer under the first information structure is:

\begin{align*}
V_{G}^{\kappa^{\sy{2}}\mu}(\gamma^1, \gamma^2) = \int_{\mathbb{X} \times \mathbb{Y}^1 \times \mathbb{Y}^2} c(x, \gamma^1(y^1), \gamma^2(y^2))\sy{\kappa^2\mu(dx,dy^1,dy^2})\sy{.}\nonumber
\end{align*}

By definition, the equilibrium solution $(\gamma^1, \gamma^2)$ for $G$ under $\kappa^{\sy{2}}\mu$ is given by the solution to the min-max problem:
\begin{align*}
\min_{\theta^1 \in \Gamma^1}\max_{\theta^2 \in \Gamma^2} V_{G}^{\kappa^{\sy{2}}\mu}(\theta^1, \theta^2)\sy{.}\nonumber
\end{align*}

Therefore, since $\gamma^1$ is the minimizing policy under $\kappa^{\sy{2}}\mu$, by perturbing the minimizer's policy $\gamma^1$ to be the policy $\eta^1 \in \Gamma^1$ we have the following inequality (i.e. we make the minimizer no longer play her optimal policy):
\begin{align*}
V_{G}^{\kappa^{\sy{2}}\mu}(\gamma^1, \gamma^2) \leq V_{G}^{\kappa^{\sy{2}}\mu}(\eta^1, \gamma^2)\sy{.}\nonumber
\end{align*}

We now wish to compare the two quantities $V_{G}^{\kappa^{\sy{2}}\mu}(\eta^1, \gamma^2)$ and $V_{G}^{\mu}(\eta^1, \eta^2)$. To do so, fix $\eta^1$ across both terms and consider a cost function $\Tilde{c}(x, \theta^2(y^2)): \mathbb{X} \times \mathbb{U}^2 \rightarrow \mathbb{R}$ such that $\Tilde{c}(x, \theta^2(y^2)) = c(x, \eta^1(y^1)), \theta^2(y^2)) \: \forall \: \theta^2 \in \Gamma^2$. I.e., by holding the minimizer's strategy constant as $\eta^1$, we reduce $c$ to $\Tilde{c}$ such that we now have a cost function that only reflects dependence on the maximizer's policy when the minimizer's policy is held at $\eta^1$. Such a function $\Tilde{c}$ clearly exists, as the value of $\eta^1(y^1)$ is only dependent on $x$ (potentially in some stochastic way, in that it depends on $Q^1(y|x)$), when $\eta^1$ (and $\mu^1$) are constant, and so can be absorbed into the dependency of $\Tilde{c}$ on $x$. 

We can now compare the single-player decision problem for the maximizer given by cost function $\Tilde{c}$ and information structures $(\kappa^{\sy{2}}\mu)^{\sy{2}}$ and $\mu^{\sy{2}}$ (which we use to denote the maximizer's private information structures present in $\kappa^{\sy{2}}\mu$ and $\mu$, respectively\sy{, i.e. the marginals on $(\mathbb{X} \times \mathbb{Y}^2$)}). This is a single-player decision problem and as such can be treated using the forward direction to Blackwell's ordering of information structures \cite[Theorem 2]{Blackwell}, which holds in this infinite-dimensional case \cite{YukselBasarBook}. Since $\Tilde{c}$ and $c$ are equal over all strategies in $\Gamma^2$, we know that $\gamma^2$ and $\eta^2$ are still optimal policies for the maximizer to play under \sy{the} respective information structures for this game. Thus, since $(\kappa^{\sy{2}}\mu)^{\sy{2}}$ is a garbling of $\mu^{\sy{2}}$ by channel $\kappa^{\sy{2}}$, and since $\Tilde{c}(x, \eta^2(y^2)) = c(x, \eta^1(\mu^1(x)), \eta^2(y^2))$, we have that:
\begin{align*}
& V_{G}^{\kappa^{\sy{2}}\mu}(\eta^1, \gamma^2) \\ = &\int_{\mathbb{X} \times \mathbb{Y}^2} \Tilde{c}(x, \gamma^2(y^2))\sy{\kappa^{\sy{2}}\mu(dx, dy^2)} \\ \leq &\int_{\mathbb{X} \times \mathbb{Y}^2} \Tilde{c}(x, \eta^2(y^2))\sy{\mu(dx, dy^2)}\\ =  &V_{G}^{\mu}(\eta^1, \eta^2) \sy{.}\nonumber
\end{align*}

Putting this all together, we have $V_{G}^{\kappa^{\sy{2}}\mu}(\gamma^1, \gamma^2) \leq V_{G}^{\kappa^{\sy{2}}\mu}(\eta^{1}, \gamma^{2}) \leq V_{G}^{\mu}(\eta^1, \eta^2)$. Since this is true for any arbitrary game $G \in \Tilde{\mathbb{G}}_\zeta(\mathbb{X}, \mathbb{Y}^1, \mathbb{Y}^2)$, we have that $\kappa^{\sy{2}}\mu \lesssim \mu$.

A nearly identical argument can be applied to show that $\mu \lesssim \kappa^{\sy{1}}\mu$.
\end{proof}

Using a similar reasoning, we also develop the following converse result:

\begin{lemma}\label{ConverseLemma}
\textit{Take fixed $\mathbb{X}, \zeta$, fixed and compact $\mathbb{Y}^1, \mathbb{Y}^2$, and information structures $\sy{\nu}$ and $\mu$ which both satisfy Assumption 2.1. If $\sy{\nu} \lesssim \mu$ over all games in $\Tilde{\mathbb{G}}_\zeta(\mathbb{X}, \mathbb{Y}^1, \mathbb{Y}^2)$, then there exist kernels $\kappa^i \in K^i$ such that:}
\begin{align*}
    \kappa^{\sy{1}}\sy{\nu} = \kappa^{\sy{2}}\mu\sy{.}\nonumber
\end{align*}
In particular, under Assumption \ref{infoStrucConditionSpecific}, we have the more explicit characterization with
\[\kappa^{\sy{1}}Q^1_\sy{\nu} = Q^1_\mu \quad \textnormal{and} \quad Q^2_\sy{\nu} = \kappa^{\sy{2}}Q^2_\mu.\] 
Where $Q^i_\mu$ and $Q^i_\sy{\nu}$ are the measurement channels for player $i$ under information structures $\mu$ and $\sy{\nu}$, respectively. 
\end{lemma}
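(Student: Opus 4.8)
\textbf{Proof proposal for Lemma \ref{ConverseLemma}.}

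The plan is to run P\k{e}ski's argument but with the finite min-max theorem replaced by our Theorem \ref{Equil} and the finite Blackwell converse replaced by Theorem \ref{ConverseBlackwellOrig}. The central idea is to construct, for each kernel $\kappa^1 \in K^1$ applied to $\nu$, an auxiliary single-player decision problem for player $2$ in which the induced cost comparison forces $\kappa^1\nu$ restricted to player $2$'s information to be more informative (in Blackwell's sense) than $\mu$ restricted to player $2$'s information, and symmetrically for $\kappa^2$ applied to $\mu$ against $\nu$; then invoke Theorem \ref{ConverseBlackwellOrig} to obtain the garbling kernels and check they can be combined so that $\kappa^1\nu = \kappa^2\mu$. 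First I would reduce to the independent-measurements form via Assumption \ref{infoStrucCondition} (as in Theorem \ref{Equil}), so that the policy spaces are the compact convex sets $\Gamma^i$ of Theorem \ref{Equil} and equilibria exist for every game in $\tilde{\mathbb{G}}_\zeta$. Next, for a fixed game $G$ with equilibrium policies, I would use the hypothesis $\nu \lesssim \mu$ together with Lemma \ref{ForwardLemma} (which gives $\kappa^2\mu \lesssim \mu$ and $\nu \lesssim \kappa^1\nu$ for every $\kappa^i$) to obtain the chain $\kappa^2\mu \lesssim \mu$ and $\nu \lesssim \kappa^1\nu$, hence $V^*(G,\kappa^2\mu) \le V^*(G,\mu) = V^*(G,\nu)$... wait, we need the other direction too: $V^*(G,\nu) \ge V^*(G,\kappa^2\mu)$ and $V^*(G,\kappa^1\nu) \ge V^*(G,\nu)$, so $V^*(G,\kappa^1\nu) \ge V^*(G,\kappa^2\mu)$ for all $\kappa^1,\kappa^2$ and all $G \in \tilde{\mathbb{G}}_\zeta$.

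The key step is then a separation/duality argument on the garbled information structures, exactly parallel to the proof of Theorem \ref{ConverseBlackwellOrig} but now in a product of the two kernel-spaces. Concretely, I would consider the two convex sets $\{\kappa^1\nu : \kappa^1 \in K^1\}$ and $\{\kappa^2\mu : \kappa^2 \in K^2\}$ as subsets of $\mathcal{P}(\mathbb{X}\times\mathbb{Y}^1\times\mathbb{Y}^2)$ (here using that garbling only player $1$'s coordinate leaves player $2$'s coordinate and $x$ fixed, and vice versa, so both sets live naturally in a space where the relevant marginals are fixed) and establish their weak compactness and convexity by the same argument used in Step (2) of Theorem \ref{ConverseBlackwellOrig} — the $w$-$s$ topology trick showing the set of garblings is closed because the marginal being garbled against is fixed. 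If the two sets are disjoint, the Hahn--Banach separation theorem for locally convex spaces (\cite[Theorem 3.4]{rudin1991functional}) on the locally convex space of measures with the $w$-$s$-type convergence produces a measurable bounded function $f(x,y^1,y^2)$, continuous in the coordinates we need, strictly separating them; interpreting $f$ as the cost function of a zero-sum game in $\tilde{\mathbb{G}}_\zeta$ (with $\mathbb{U}^i = \mathbb{Y}^i$ and the players using identity-type policies composed with garblings, as in Step (4) of Theorem \ref{ConverseBlackwellOrig}), I would derive $\sup_{\kappa^2}\inf_{\kappa^1}\langle \kappa^1\nu,f\rangle$ versus $\sup_{\kappa^1}\inf_{\kappa^2}\langle\kappa^2\mu,f\rangle$ and contradict $V^*(G,\kappa^1\nu)\ge V^*(G,\kappa^2\mu)$; here I would use Theorem \ref{Equil} to guarantee that these iterated sup-inf's over garblings are actually attained and equal to the game values. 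Hence the two sets intersect: there exist $\kappa^1,\kappa^2$ with $\kappa^1\nu = \kappa^2\mu$. Finally, under Assumption \ref{infoStrucConditionSpecific} I would unpack the product structure $Q^1(dy^1|x)Q^2(dy^2|x)\zeta(dx)$: garbling player $1$ only touches $Q^1$ and garbling player $2$ only touches $Q^2$, so the equality $\kappa^1\nu = \kappa^2\mu$ disintegrates coordinatewise into $\kappa^1 Q^1_\nu = Q^1_\mu$ and $Q^2_\nu = \kappa^2 Q^2_\mu$ (after matching which side the identity kernel sits on, exactly as in the statement).

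The main obstacle I expect is the separation step: unlike the single-player case in Theorem \ref{ConverseBlackwellOrig}, here I am separating two sets each of which is an image of a kernel space, embedded in a three-factor product space, and I must be careful that the correct marginals are fixed so that (a) both sets are weakly compact via the $w$-$s$ argument, and (b) the separating functional, which a priori is only continuous against functions continuous in the "free" coordinates, still legitimately corresponds to a cost function of a game in $\tilde{\mathbb{G}}_\zeta$ — i.e. bounded and continuous in the action variables for every $x$. A secondary subtlety is ensuring the sup-inf over garblings in the contradiction step equals the actual saddle value $V^*(G,\cdot)$ rather than merely bounding it; this is where invoking Theorem \ref{Equil} (existence of equilibria, hence sup-inf $=$ inf-sup and attainment) is essential, and one must check the reduction identifying "garbling composed with identity policy" with an arbitrary admissible (randomized) policy, which is the same realization argument (\cite[Lemma 1.2]{gihman2012controlled}) used elsewhere in the paper.
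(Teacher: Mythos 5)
Your route is genuinely different from the paper's. The paper does not separate the two sets $\{\kappa^1\nu\}$ and $\{\kappa^2\mu\}$ directly; instead it takes the equilibrium policies $(\gamma^1,\gamma^2)$ under $\nu$ and $(\eta^1,\eta^2)$ under $\mu$, uses the saddle-point inequalities to get $V_G^{\nu}(\gamma^1,\eta^2)\le V_G^{\mu}(\eta^1,\eta^2)$ and $V_G^{\nu}(\gamma^1,\gamma^2)\le V_G^{\mu}(\gamma^1,\eta^2)$, freezes one player's policy to reduce each inequality to a single-player decision problem (arguing that every admissible single-player cost arises this way), and then applies Theorem \ref{ConverseBlackwellOrig} twice to get the two marginal garbling relations $\kappa^1 Q^1_\nu = Q^1_\mu$ and $Q^2_\nu=\kappa^2 Q^2_\mu$, which are then combined. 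Your direct separation in $\mathcal{P}(\mathbb{X}\times\mathbb{Y}^1\times\mathbb{Y}^2)$ would, if completed, deliver the joint equality $\kappa^1\nu=\kappa^2\mu$ in one shot (and your marginalization argument for the explicit characterization under Assumption \ref{infoStrucConditionSpecific} is correct); the price is that you must redo the compactness/duality work of Theorem \ref{ConverseBlackwellOrig} in a three-factor space where both sets share only the fixed $\mathbb{X}$-marginal $\zeta$, so that the separating functional is jointly continuous in $(y^1,y^2)$ and hence a legitimate cost in $\Tilde{\mathbb{G}}_\zeta$. You flag this correctly and it is doable by the same $w$-$s$ machinery.

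There is, however, a concrete logical gap in how you set up the contradiction. The inequality $V^*(G,\kappa^1\nu)\ge V^*(G,\kappa^2\mu)$ does \emph{not} follow from your premises: Lemma \ref{ForwardLemma} and the hypothesis give $V^*(G,\kappa^1\nu)\ge V^*(G,\nu)$, $V^*(G,\nu)\le V^*(G,\mu)$, and $V^*(G,\mu)\ge V^*(G,\kappa^2\mu)$, and these cannot be chained (e.g.\ the values $1\ge 0$, $0\le 10$, $10\ge 5$ satisfy all three but $1<5$). Likewise $\sup_{\kappa^2}\inf_{\kappa^1}\langle\kappa^1\nu,f\rangle$ is not meaningful since the inner expression does not involve $\kappa^2$. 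The argument is salvageable without this chain: with the separation normalized so that $\langle\kappa^1\nu,f\rangle\ge D_2>D_1\ge\langle\kappa^2\mu,f\rangle$ for all $\kappa^1,\kappa^2$, take the game with cost $f$ and $\mathbb{U}^i=\mathbb{Y}^i$; then the maximizer's identity policy under $\nu$ guarantees $V^*(G,\nu)\ge\inf_{\kappa^1}\langle\kappa^1\nu,f\rangle\ge D_2$, while the minimizer's identity policy under $\mu$ guarantees $V^*(G,\mu)\le\sup_{\kappa^2}\langle\kappa^2\mu,f\rangle\le D_1$, directly contradicting $\nu\lesssim\mu$. You should replace the garbled-value chain with this direct contradiction of the hypothesis.
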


\begin{proof}
Let $(\gamma^1, \gamma^2)$ be the equilibrium solution under $\sy{\nu}$ and let $(\eta^1, \eta^2)$ be the equilibrium solution under $\mu$. Let $Q^i_{\sy{\nu}}$ and $Q^i_{\mu}$ be the measurement channel for player $i$ under the information structures $\sy{\nu}$ and $\mu$, respectively. As in Lemma \ref{ForwardLemma}, these equilibria exist and are the solutions of the standard min-max problem. 

Therefore, we have the following inequality:
\begin{align*}
V_{G}^{\sy{\nu}}(\gamma^1, \eta^2) \leq \max_{\theta^2 \in \Gamma^2}V_{G}^{\sy{\nu}}(\gamma^1, \theta^2)= V_{G}^{\sy{\nu}}(\gamma^1, \gamma^2)\sy{.}\nonumber
\end{align*}

Likewise, we can determine the following:
\begin{align*}
V_{G}^{\mu}(\eta^1, \eta^2) = \min_{\alpha^1 \in \Gamma^1}V_{G}^{\sy{\nu}}(\alpha^1, \eta^2)\leq V_{G}^{\mu}(\gamma^1, \eta^2)\sy{.}\nonumber
\end{align*}

In addition, by assumption that $\sy{\nu} \lesssim \mu$ we have that:
\begin{align*}
V_{G}^{\sy{\nu}}(\gamma^1, \gamma^2) \leq V_{G}^{\mu}(\eta^1, \eta^2)\sy{.}\nonumber
\end{align*}

From above, one observes that $V_{G}^{\sy{\nu}}(\gamma^1, \eta^2) \leq V_{G}^{\mu}(\eta^1, \eta^2)$. In the same manner as in Lemma \ref{ForwardLemma}, we hold $\eta^2$ constant across both terms and develop a reduced single-player cost function $\Tilde{c}$. \sy{Once again, we use $\nu^i$ and $\mu^i$ to denote the private (i.e. marginal) information structure for Player $i$ under $\nu$ and $\mu$, respectively}. We then have a single-player decision problem where we observe that $\gamma^1$ and $\eta^1$ are still the optimal policies for the minimizer for each respective information structure:
\begin{align*}
 J(\zeta,\sy{\nu}^1,\gamma^1) \nonumber &= \int_{\mathbb{X} \times \mathbb{Y}^1} \Tilde{c}(x, \gamma^1(y^1))\sy{\nu(dx, dy^1)} \nonumber \\ &\leq \int_{\mathbb{X} \times \mathbb{Y}^1} \Tilde{c}(x, \eta^1(y^1))\sy{\mu(dx,dy^1)} = J(\zeta, \mu^1,\eta^1) \sy{.}
\end{align*}

Since the inequality $V_{G}^{\sy{\nu}}(\gamma^1, \eta^2) \leq V_{G}^{\mu}(\eta^1, \eta^2)$ holds true for every arbitrary zero-sum game $G \in\Tilde{\mathbb{G}}_\zeta(\mathbb{X}, \mathbb{Y}^1, \mathbb{Y}^2)$, it holds for every game in the subclass $\hat{\mathbb{G}}$, defined here to be all games in $\Tilde{\mathbb{G}}_\zeta(\mathbb{X}, \mathbb{Y}^1, \mathbb{Y}^2)$ where the action space of the maximizer is $\mathbb{U}^2 = \{0\}$. Moreover, we observe that for any arbitrary bounded single-player cost function that is continuous in the player's action $\bar{c}(x, u^1): \mathbb{X} \times \mathbb{U} \rightarrow \mathbb{R}$\sy{,} there exists a two-player cost function $\hat{c}(x, u^1, u^2)$ corresponding to some game in $\hat{\mathbb{G}}$ such that $\hat{c}(x, u^1, u^2) = \bar{c}(x, u^1) \: \forall \: u^1 \in \mathbb{U}^1$ (following naturally from the fact that $u^2 = 0$ for these games). One such construction of $\hat{c}$ would be $\hat{c}(x, u^1, u^2) = \bar{c}(x, u^1) + u^2$; when played in $\hat{\mathbb{G}}$, $\hat{c}(x, u^1, u^2) = \bar{c}(x, u^1) \: \forall \: u^1 \in \Gamma^1$. We note that since $\bar{c}(x, u^1)$ is continuous in $u^1$ for all $x$ and is bounded, it is a valid for a game in $\hat{\mathbb{G}} \subset \Tilde{\mathbb{G}}_\zeta(\mathbb{X}, \mathbb{Y}^1, \mathbb{Y}^2)$ to use $\hat{c}(x, u^1, u^2)$. 

Therefore, $\bar{c}$ is a valid single-player reduction of $\hat{c}$ when the maximizer's strategy is held constant. Since this process can be done for any single-player cost function $\hat{c}$, we observe that for constant $\mathbb{U}^i$ and $\mathbb{X}$, the reduction of all measurable and bounded two-player cost functions that are continuous in player's actions with one player playing a constant strategy is surjective on the entire space of bounded single-player cost functions that are continuous in the player's action. Therefore, the inequality above will hold over all single-player cost functions that are bounded and continuous in action, since it holds over all arbitrary two-player game\sy{s} $G \in \Tilde{\mathbb{G}}_\zeta(\mathbb{X}, \mathbb{Y}^1, \mathbb{Y}^2)$. 

Lastly, we observe that since $c$ is framed such that a higher quantity is better for the maximizer, the minimizer wants to minimize the value of $\Tilde{c}$. Therefore, from the minimizer's perspective, the inequality above indicates that she can never perform worse under $\sy{\nu}$ than under $\mu$ over all single-player problems with bounded cost functions that are continuous in the player's action. Thus, by the converse direction to Blackwell's ordering of information structures \cite[Theorem 6]{Blackwell}, which by the lemma assumptions and the restrictions on the class $\Tilde{\mathbb{G}}_\zeta(\mathbb{X}, \mathbb{Y}^1, \mathbb{Y}^2)$ (namely compactness of $\mathbb{Y}^i$ and $\mathbb{U}^i$) holds in this infinite setup due to Theorem \ref{ConverseBlackwellOrig}, we have that $\mu^{\sy{1}}$ must be a garbling of $\sy{\nu}^{\sy{1}}$.

In a similar manner, by observing that $V_{G}^{\sy{\nu}}(\gamma^1, \gamma^2) \leq  V_{G}^{\mu}(\gamma^1, \eta^2)$ one discovers that $\sy{\nu}^{\sy{2}}$ must be a garbling of $\mu^{\sy{2}}$.

Therefore, we have that the minimizer's channel in $\sy{\nu}$ is garbled in $\mu$ and the maximizer's channel in $\mu$ is garbled in $\sy{\nu}$. Combining these two conditions yields the desired equality for some $\kappa^i \in \sy{K^i}$:
\begin{align*}
\kappa^{\sy{1}}\sy{\nu} = \kappa^{\sy{2}}\mu\sy{.}\nonumber
\end{align*}
\end{proof}

The following is our main result.

\sy{\begin{theorem}\label{Infinite}
Take fixed $\mathbb{X}, \zeta$, fixed and compact $\mathbb{Y}^1, \mathbb{Y}^2$, and information structures $\sy{\nu}$ and $\mu$ which both satisfy Assumption 2.1. Then $\mu$ is better for the maximizer than $\sy{\nu}$ ($\sy{\nu} \lesssim \mu$) over all games in $\Tilde{\mathbb{G}}_\zeta(\mathbb{X}, \mathbb{Y}^1, \mathbb{Y}^2)$ if and only if there exist kernels $\kappa^i \in K^i$ such that:
\begin{align*}
    \kappa^{1}\sy{\nu} = \kappa^{2}\mu\sy{.}\nonumber
\end{align*}
\end{theorem}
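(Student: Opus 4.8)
The plan is to obtain Theorem \ref{Infinite} as essentially a corollary of the two preceding lemmas together with the elementary observation that the order $\lesssim$ is transitive: for each fixed game $G$, $\nu \lesssim \mu$ is nothing but the numerical inequality $V^*(G,\mu)\ge V^*(G,\nu)$, so chaining such inequalities over a common game is immediate. Thus it suffices to handle the two implications separately.

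The ``only if'' direction is exactly the content of Lemma \ref{ConverseLemma}: assuming $\nu\lesssim\mu$ over all games in $\Tilde{\mathbb{G}}_\zeta(\mathbb{X},\mathbb{Y}^1,\mathbb{Y}^2)$, that lemma produces kernels $\kappa^i\in K^i$ with $\kappa^1\nu=\kappa^2\mu$ (and, under Assumption \ref{infoStrucConditionSpecific}, the sharper channel-wise identities). There is nothing to add.

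For the ``if'' direction, suppose $\kappa^1\nu=\kappa^2\mu$ for some $\kappa^i\in K^i$. I would apply Lemma \ref{ForwardLemma} twice: once to the information structure $\nu$, which gives $\nu\lesssim\kappa^1\nu$ (garbling the minimizer's signal cannot help the minimizer, hence cannot hurt the maximizer), and once to $\mu$, which gives $\kappa^2\mu\lesssim\mu$ (garbling the maximizer's signal cannot help the maximizer). Since $\kappa^1\nu$ and $\kappa^2\mu$ are the same probability measure on $\mathbb{X}\times\mathbb{Y}^1\times\mathbb{Y}^2$, these compose into $\nu\lesssim\kappa^1\nu=\kappa^2\mu\lesssim\mu$, i.e. $V^*(G,\nu)\le V^*(G,\kappa^1\nu)=V^*(G,\kappa^2\mu)\le V^*(G,\mu)$ for every $G\in\Tilde{\mathbb{G}}_\zeta(\mathbb{X},\mathbb{Y}^1,\mathbb{Y}^2)$, which is precisely $\nu\lesssim\mu$.

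Because the body of the argument is just this synthesis, the only thing that needs a line of justification --- and the closest thing to an obstacle --- is that the garbled structures $\kappa^1\nu$ and $\kappa^2\mu$ satisfy the hypothesis (Assumption \ref{infoStrucCondition}) under which Lemma \ref{ForwardLemma} and the equilibrium existence result Theorem \ref{Equil} were stated. This is routine: if $\nu$ is absolutely continuous with respect to $\bar Q^1\otimes\bar Q^2\otimes\zeta$, then garbling player $1$'s signal by a kernel $\kappa^1$ on $\mathbb{Y}^1\times\mathbb{Y}^1$ yields a structure absolutely continuous with respect to $\bar R^1\otimes\bar Q^2\otimes\zeta$, where $\bar R^1$ is the measure $\int_{\mathbb{Y}^1}\kappa^1(\cdot\,|\,y^1)\,\bar Q^1(dy^1)$, which dominates the garbled marginal on the new $\mathbb{Y}^1$-coordinate; symmetrically for $\kappa^2\mu$. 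With this in hand the two lemmas apply as stated and the theorem follows. I would also record, as noted before the statement, that the same chain of reasoning goes through verbatim in the finite setting --- with von Neumann's minimax theorem replacing Theorem \ref{Equil} --- so Theorem \ref{Infinite} strictly subsumes Theorem \ref{PeskiTheorem}.
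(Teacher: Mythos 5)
Your proposal is correct and follows essentially the same route as the paper: the \emph{only if} direction is Lemma \ref{ConverseLemma} verbatim, and the \emph{if} direction is the chain $\nu \lesssim \kappa^{1}\nu = \kappa^{2}\mu \lesssim \mu$ obtained from Lemma \ref{ForwardLemma} and transitivity of $\lesssim$, exactly as in the paper. Your additional remark verifying that the garbled structures inherit Assumption \ref{infoStrucCondition} is a sensible (and correct) point of care that the paper leaves implicit, but it does not change the argument.
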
}
\begin{proof}
The \textit{if} direction follows directly from Lemma \ref{ForwardLemma}:
\begin{align*}
    \sy{\nu} \lesssim \kappa^{\sy{1}}\sy{\nu} = \kappa^{\sy{2}}\mu \lesssim \mu\nonumber
\end{align*}
The \textit{only if} direction is given in Lemma \ref{ConverseLemma}.
\end{proof}

\begin{corollary}\label{Corollary} Take fixed $\mathbb{X}, \zeta$, and fixed and compact $\mathbb{Y}^1, \mathbb{Y}^2$. The value of additional information to a decision maker is never negative for that decision maker in any zero-sum game in $\Tilde{\mathbb{G}}_\zeta(\mathbb{X}, \mathbb{Y}^1, \mathbb{Y}^2)$.
\end{corollary}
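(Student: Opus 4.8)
The plan is to reduce the statement to the forward direction already established in Lemma \ref{ForwardLemma}, since ``giving a decision maker more information'' is precisely the situation in which the coarser information structure is a garbling of the finer one. First I would formalize ``additional information'': let $\mu$ be an information structure on $(\mathbb{X},\mathbb{Y}^1,\mathbb{Y}^2)$ satisfying Assumption \ref{infoStrucCondition}, and suppose Player $i$'s information is degraded to $\kappa^i\mu$ for some $\kappa^i \in K^i$, while the other player's information is unchanged; equivalently, passing from $\kappa^i\mu$ to $\mu$ is exactly the act of supplying Player $i$ with strictly more information (the pre-garbled signal). This is the natural standard-Borel analogue of ``more informative'' in the sense of Blackwell, and it is the only notion of ``more information'' for which a garbling representation is available.

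The key step is then a direct invocation of Lemma \ref{ForwardLemma}. If the decision maker receiving more information is the maximizer (Player $2$), the lemma gives $\kappa^{2}\mu \lesssim \mu$ over all games in $\Tilde{\mathbb{G}}_\zeta(\mathbb{X},\mathbb{Y}^1,\mathbb{Y}^2)$, i.e. $V^*(G,\mu) \geq V^*(G,\kappa^2\mu)$ for every such $G$; since $c$ is framed so that larger values are better for the maximizer, this says the maximizer's equilibrium payoff cannot decrease when she is handed the ungarbled signal. If instead the decision maker receiving more information is the minimizer (Player $1$), the lemma gives $\mu \lesssim \kappa^{1}\mu$, i.e. $V^*(G,\kappa^1\mu) \geq V^*(G,\mu)$, so the \emph{minimizer's} cost at equilibrium, which is $V^*$, can only go \emph{down} when she receives the ungarbled signal — again a (weak) improvement for the player gaining information. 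In both cases the conclusion holds for every game in the class $\Tilde{\mathbb{G}}_\zeta(\mathbb{X},\mathbb{Y}^1,\mathbb{Y}^2)$, which is the content of the corollary.

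I do not expect a serious obstacle here: the corollary is essentially a restatement of Lemma \ref{ForwardLemma} once one observes that ``more information'' means ``less garbled.'' The only point needing a little care is the bookkeeping of signs/directions — ensuring that ``the value of additional information is never negative \emph{for that decision maker}'' is correctly read as a decrease in cost for the minimizer and an increase in payoff for the maximizer, both of which are covered by the two relations $\kappa^2\mu \lesssim \mu$ and $\mu \lesssim \kappa^1\mu$ of Lemma \ref{ForwardLemma}. One should also note that the hypotheses of Lemma \ref{ForwardLemma} (Assumption \ref{infoStrucCondition}, compact $\mathbb{Y}^i$, and membership in $\Tilde{\mathbb{G}}_\zeta$) are exactly the standing assumptions of the corollary, so no additional conditions are introduced.
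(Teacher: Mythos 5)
Your proposal is correct and follows essentially the same route as the paper: both reduce the corollary to the garbling characterization (the coarser structure obtained by discarding the extra information is a garbling of the finer one) and then invoke the forward direction, i.e. Lemma \ref{ForwardLemma}, to conclude $V^*(G,\mu)\geq V^*(G,\kappa^2\mu)$ and $V^*(G,\kappa^1\mu)\geq V^*(G,\mu)$. The paper's one-line proof only spells out the maximizer's side and notes that mapping the additional information to a fixed value gives the required kernel; your explicit treatment of the minimizer's sign convention is a harmless elaboration of the same argument.
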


\begin{proof}
If $\mu$ is an information structure which is more informative for the maximizer than another information structure $\sy{\nu}$, we know there exists a kernel $\kappa^{\sy{2}}$ such that $\sy{\nu} = \kappa^{\sy{2}}\mu$, and so know that $V^{*}(G, \mu) \geq V^{*}(G, \sy{\nu})$, and $\kappa^{\sy{2}}$ is a well-defined map since we can map any additional information to a fixed number.
\end{proof}

We note that the value of information is not always positive to a player, since many situations (such as where the action set is a singleton) will result in no change in performance despite additional information. 

Corollary \ref{Corollary} is consistent with the work of De Meyer, Lehrer and Rosenberg \cite[Theorem 3.1]{Lehrer2009}, who found this result when studying the value of information in zero-sum games with incomplete information with a slightly different setup, where the `state of nature' was replaced by an individual `type' for each player drawn from a finite space, and where the cost function depended on both players' types. 

We note that for Theorem \ref{Infinite}, the proof will follow for any class of zero-sum games for which every game has an equilibrium solution and Blackwell's ordering of information structures holds for each player when holding the other player's action constant. Therefore, the ordering result can be generalized to be applicable for more general classes of zero-sum games than  $\Tilde{\mathbb{G}}_\zeta(\mathbb{X}, \mathbb{Y}^1, \mathbb{Y}^2)$. 

\subsection{Discussion}

This main result comes with the following intuitive interpretation:

An information structure $\mu$ is better for the maximizer than $\sy{\nu}$ if and only if one of the following holds:

\begin{itemize}
\item[1.] $\sy{\nu}^{\sy{2}}$ is a non-identity garbling of the maximizer's channel from $\mu$, and the minimizer's channel is identical.
\item[2.] $\mu^{\sy{1}}$ is a non-identity garbling of the minimizer's channel from $\sy{\nu}$, and the maximizer's channel is identical.
\item[3.] $\sy{\nu}^{\sy{2}}$ is a non-identity garbling of the maximizer's channel from $\mu$, and $\mu^{\sy{1}}$ is a non-identity garbling of the minimizer's channel from $\sy{\nu}$.
\item[4.] The information structures are identical.
\end{itemize}

In plain terms, this has the following interpretation: In zero-sum games, improving or hurting both players' information structures will never give a general benefit to either player over all games. The only time a player will not do worse under a new information structure is if it only makes his channel better, only makes his opponent's channel worse, makes his channel better and his opponent's channel worse, or is identical to the previous information structure (and the player is guaranteed to not do worse if any of these conditions holds).

In the following, we present an example showing that we cannot {\it view garbling from decision maker to decision maker in isolation from the entire information structure}.  Consider a finite probability space game with $\mathbb{X} = \mathbb{U} = \mathbb{Y}^1 = \mathbb{Y}^2 = \{1,2,3,4\}$, with $x$ distributed according to the uniform distribution, and cost function:
$$
c(x,u^1,u^2) = \begin{cases} -12, \quad u^1 = x \quad \textnormal{and} \quad u^1 \neq u^2 \\ -5, \quad u^1 = x \quad \textnormal{and} \quad u^1 = u^2 \\ 0, \quad \textnormal{otherwise}\end{cases}
$$

Player 1 (the minimizer) gets rewarded for guessing $x$ correctly, and Player 2 (the maximizer) can only limit his losses by playing the same action as Player 1. We can observe that Player 1's optimal strategy will always be to attempt to guess $x$ correctly, since she is only penalized for guessing incorrectly, while Player 2's optimal strategy will always be to attempt to copy Player 1's action since that is the only way he can positively affect the outcome for himself. 

Now consider the following two information structures:

$\mu_1$: Under this information structure, both players receive \textit{the same random measurement} $y^1 = y^2 = y$, where $y = x$ with probability $0.9$ and $y$ is any of the other three incorrect values with probability $0.1/3$. Under this information structure, the best strategy for Player 1 (and thus also for Player 2) is to guess her observation, so $u^1 = u^2 = y$ and the expected payoff is \sy{$-5(0.9) = -4.5$}.

$\mu_2$: Under this information structure, both players receive \textit{conditionally (given $x$) independent measurements}. For Player 1, $y^1 = x$ with probability $0.85$, and is any of the three incorrect values of $x$ with probability $0.05$ each. Player 2 has the same structure as under $\mu_1$, with a $0.9$ chance of success, albeit now uncoupled with Player 1's chance of success. The optimal strategies remain the same under this information structure, but the expected payoff is now \sy{$ -5(0.9)(0.85) + (-12)(0.85)(0.1)= -4.845$}

Therefore, $\mu_2$ is better for the minimizer than $\mu_1$. But, we can observe that the minimizer's channel in $\mu_2$ is garbled from $\mu_1$, in the sense that the distribution on $\mathbb{Y}^1$ for Player 1 can be run through a stochastic kernel to get the distribution under $\mu_2$. The maximizer's channel is identical in both games in the sense that the distribution on $\mathbb{Y}^2$ is unchanged. Yet, the ordering of information structures rule from Theorem \ref{PeskiTheorem} \sy{appears to have been} violated, since the minimizer performs better under the garbled information structure. This demonstrates that we cannot consider garbling in isolation and the comparison should be in view of the entire information structure. 

\sy{While $\mu_2$ appears to be a garbling of $\mu_1$ for the minimizer, it is not a garbling in the sense of this paper, since $\mu_1$ features dependent measurements between the players, while $\mu_2$ has independent measurements between the players. Definition \ref{garbling2} specifies that garblings are done in view of the entire information structure, and so a garbling could not decouple dependence when going from $\mu_1$ to $\mu_2$. If the garbling had been done in accordance with the results of this paper so that $y^1=y^2$ but $y^1$ is then garbled to arrive at some $\tilde{y}^1$ whose probability measure is as specified under $\mu_2$, then the players' measurements would still contain dependence after the garbling. Under this construction, naturally Player 1 would perform worse in the equilibrium under the garbled information structure, since Player 2 has maintained a good ability to copy Player 1's actions when Player 1 is correct due to the dependence being maintained, while Player 1 has received a disadvantage in being able to accurately guess $x$. If the stochastic kernel garbling Player 1's information is as given by $\tilde{\kappa}$ below, where the $(i,j)$ entry is the probability of Player 1 measuring $\tilde{y}^1 = i$ given that the players originally measured $y^1 = y^2 = j$, then the expected equilibrium payoff in this situation would be $-5(0.9)(0.9423) + (-12)(0.1)(0.0192)= -4.263$, which is worse for Player 1, as expected. Under this garbling, Player 1 has a probability of 0.85 of observing the correct measurement $\tilde{y}^1 = x$ and a 0.05 probability of observing any of the three incorrect measurements, matching the distribution specified in the definition of $\mu_2$. 
\begin{equation}
\tilde{\kappa} = \begin{bmatrix} 0.9423 & 0.0192 & 0.0192 & 0.0192 \\
0.0192 & 0.9423 & 0.0192 & 0.0192 \\
0.0192 & 0.0192 & 0.9423 & 0.0192 \\
0.0192 & 0.0192 & 0.0192 & 0.9423 \end{bmatrix} \nonumber
\end{equation}
}

\section{Conclusion}\label{conclusions}
In this paper, we presented an ordering of information structures for a broad class of zero-sum Bayesian games with incomplete information in standard Borel spaces. We also provided two key supporting results: i) a refinement on the conditions for the existence of equilibria in zero-sum games with incomplete information in standard Borel measurement and action spaces and ii) a partial converse to Blackwell's ordering of information structures in this general setting. 

\section{Acknowledgements}

The authors are grateful to M. P\k{e}ski for his detailed comments on both the presentation, technical content, and the related literature, and T. Ba\c{s}ar, M. Le Treust and M. Raginsky for their technical feedback. The authors are also thankful to the three referees who have provided very detailed feedback which has contributed to both the presentation and technical contents of our paper.

\end{document}